\theoremstyle{definition}
	\newtheorem{defn}{Definition}[section]
	\theoremstyle{plain}
	\newtheorem{thm}[defn]{Theorem}
	\newtheorem*{thm*}{Theorem}
	\newtheorem{lem}[defn]{Lemma}
	\newtheorem*{lem*}{Lemma}
	\newtheorem{prop}[defn]{Proposition}
	\newtheorem*{prop*}{Proposition}
	\newtheorem{cor}[defn]{Corollary}
	\newtheorem*{cor*}{Corollary}
	\theoremstyle{remark}
	\newtheorem{rmk}[defn]{}
	\DeclareMathOperator{\Ker}{Ker}
	\newcommand{\C}{\mathbb C}
	\newcommand{\N}{\mathbb N}
	\newcommand{\mb}[1]{\mathbb{#1}}
	\newcommand{\mc}[1]{\mathcal{#1}}
	\newcommand{\ms}[1]{\mathscr{#1}}
	\newcommand{\eu}[1]{\EuScript{#1}}
	\newcommand{\frk}[1]{\mathfrak{#1}}
	\newcommand{\iprod}[2]{\left\langle #1, #2 \right\rangle}
	\newcommand{\norm}[1]{\left\| #1 \right\|}
\title[Quantum exchangeable sequences]{Quantum exchangeable sequences of algebras}
\author{Stephen Curran}
\address{Department of Mathematics\\University of California at Berkeley\\Berkeley, CA 94720}
\keywords{Free probability, quantum exchangeability, quantum permutation group, quantum invariance}
\subjclass[2000]{46L54 (46L65, 60G09)}
\email{\href{mailto:curransr@math.berkeley.edu}{curransr@math.berkeley.edu}}
\urladdr{\href{http://www.math.berkeley.edu/~curransr}{http://www.math.berkeley.edu/~curransr}}
\date{\today}
\renewcommand{\labelenumi}{(\roman{enumi})}
\begin{document}
\begin{abstract}
We extend the notion of quantum exchangeability, introduced by K\"{o}stler and Speicher in \cite{spekos}, to sequences $(\rho_1,\rho_2,\dotsc)$ of homomorphisms from an algebra $C$ into a noncommutative probability space $(A,\varphi)$, and prove a free de Finetti theorem in this context: an infinite quantum exchangeable sequence $(\rho_1,\rho_2,\dotsc)$ is freely independent and identically distributed with respect to a conditional expectation.  

As in the classical case, the theorem fails for finite sequences.  We give a characterization of finite quantum exchangeable sequences, which can be viewed as a noncommutative analogue of the classical urn sequences.  We then give an approximation to how far a finite quantum exchangeable sequence is from being freely independent with amalgamation.
\end{abstract}

\maketitle

\section{Introduction}

A sequence of random variables is called \textit{exchangeable} if its joint distribution is invariant under permutations.  De Finetti's theorem states that an infinite exchangeable sequence is independent and identically distributed after conditioning on the exchangeable $\sigma$-algebra. This was originally proved for numerical random variables by de Finetti, but holds even for random variables which take values in any Borel space.  On the other hand, finite exchangeable sequences need not be conditionally independent and are instead characterized as mixtures of urn sequences obtained by sampling without replacement from a finite set.  However, if the sequence $(X_1,\dotsc,X_k)$ can be extended to an exchangeable sequence $(X_1,\dotsc,X_n)$ where $n$ is much larger than $k$, then $(X_1,\dotsc,X_k)$ is approximately conditionally independent.  This statement was made precise by Diaconis and Freedman in \cite{diacfreed}, where it was shown that the variation distance between the distributions of $(X_1,\dotsc,X_k)$ and the nearest sequence of conditionally independent and identically distributed random variables is bounded by $k(k-1)/n$.  For a concise treatment of these and related results, see the recent text of Kallenberg \cite{kallenberg}.

In \cite{spekos}, K\"{o}stler and Speicher discovered that de Finetti's theorem for numerical random variables has a natural free analogue if one replaces exchangeability by the stronger condition of invariance under quantum permutations.  Here quantum permutation refers to the quantum permutation group $A_s(n)$ of Wang \cite{qpermutations}, which is a noncommutative version of the permutation group $S_n$.  As in the classical case, the theorem fails for finite sequences, as demonstrated in \cite{spekos} by an example which can be viewed as a noncommutative analogue of the classical urn sequence.

In this paper we extend the free de Finetti theorem of K\"{o}stler and Speicher to sequences of algebras, which corresponds to allowing more general state spaces. The framework is a unital $*$-algebra $C$, a noncommutative probability space $(A,\varphi)$, and a sequence $(\rho_i)_{i \in \N}$ of unital $*$-homomorphisms from $C$ into $A$ (see \ref{motivation} for motivating examples).  The joint distribution of this sequence is the linear functional $\varphi_\rho$ on $C_\infty = C * C * \dotsb$, the free product (with amalgamation over $\C1$) of countably many copies of $C$, defined by $\varphi_\rho = \varphi \circ \rho$ where $\rho = \rho_1 * \rho_2 *\dotsb$.  Such a sequence will be called \textit{quantum exchangeable} if for each $n$, $\varphi_\rho$ is invariant under quantum permutations of $(\rho_1,\dotsc,\rho_n)$.  We will prove the following free de Finetti theorem in this context:

\begin{thm}\label{definetti}
 Let $(\rho_i)_{i \in \N}$ be an infinite sequence of unital $*$-homomorphisms from a unital $*$-algebra $C$ into a W$^*$-probability space $(M,\varphi)$. Then the following are equivalent:
\begin{enumerate}
\item  $(\rho_i)_{i \in \N}$ is quantum exchangeable.
\item There is a W$^*$-subalgebra $1 \in B \subset M$ and $\varphi$-preserving conditional expectation $E:W^*(\rho(C_\infty)) \to B$ such that the sequence $(\rho_i)_{i \in \N}$ is freely independent and identically distributed with respect to $E$.
\end{enumerate}

\end{thm}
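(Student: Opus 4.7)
\emph{Overview and easy direction.} The implication (ii) $\Rightarrow$ (i) is immediate: if the $\rho_i$ are freely independent and identically distributed over $B$, then the joint distribution $\varphi_\rho$ is completely determined by the $B$-valued distribution of one $\rho_i$ together with the free-product-with-amalgamation structure, and this data is manifestly invariant under any quantum permutation of the indices. The substantive direction is (i) $\Rightarrow$ (ii), for which I would adapt the strategy of K\"ostler and Speicher \cite{spekos}, originally developed for sequences of single random variables, to the present setting of sequences of homomorphisms from an algebra.

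\emph{Tail algebra and conditional expectation.} Set $M := W^*(\rho(C_\infty))$ and define the tail algebra
$$ B := \bigcap_{n \geq 1} W^*\bigl(\rho_m(C) : m \geq n\bigr). $$
Using standard modular-theoretic arguments---the modular automorphism group of $\varphi|_M$ preserves each $W^*(\rho_m(C) : m \geq n)$ by classical exchangeability applied to the modular data, and therefore preserves $B$---Takesaki's theorem yields a unique $\varphi$-preserving conditional expectation $E : M \to B$. Identical distribution $E \circ \rho_i = E \circ \rho_j$ then follows by applying quantum (and hence classical) exchangeability to pairs.

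\emph{Free independence via quantum averaging.} For freeness over $B$, I would exploit the following consequence of quantum exchangeability: for each $n$, the distributional invariance
$$ \varphi\bigl(\rho_{i_1}(c_1) \cdots \rho_{i_k}(c_k)\bigr) = \sum_{j_1,\dotsc,j_k=1}^{n} \varphi\bigl(\rho_{j_1}(c_1) \cdots \rho_{j_k}(c_k)\bigr)\, h_n(u_{i_1 j_1} \cdots u_{i_k j_k}) $$
holds, where $(u_{ij})$ is the magic unitary of $A_s(n)$ and $h_n$ is its Haar state. Banica's formula expresses $h_n(u_{i_1 j_1} \cdots u_{i_k j_k})$ as a Weingarten-type sum over noncrossing partitions. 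A reverse-martingale convergence argument as $n \to \infty$ then identifies $E\bigl(\rho_{i_1}(c_1) \cdots \rho_{i_k}(c_k)\bigr)$ with a sum over noncrossing partitions of $B$-valued cumulant-type expressions. For a word with $E(\rho_{i_j}(c_j)) = 0$ and consecutive distinct indices, only partitions compatible with the index pattern contribute, and the centering forces all of these to vanish---precisely the freeness-with-amalgamation condition.

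\emph{Main obstacle.} The principal difficulty is twofold. First, one must verify that in the reverse-martingale limit the surviving noncrossing-partition sum is genuinely an expression in $E$-moments of single $\rho_i$'s with $B$-valued coefficients; this is delicate because $B$ is itself generated by tails of the sequence, so the $B$-coefficients and the $\rho_i(c_j)$ entries are entangled at every finite stage. Second, one must match the Banica/Weingarten noncrossing expansion term-by-term with the $B$-valued free cumulants of a single $\rho_i$; unlike the scalar setting of \cite{spekos}, where cumulants reduce to scalars and the combinatorics is uncluttered, here the full $B$-bimodule structure must be tracked through the calculation, most likely by induction on word length combined with a careful use of the already-established identical distribution.
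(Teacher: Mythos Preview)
Your overall strategy is sound and follows K\"ostler--Speicher, but it differs from the paper's route in a structurally significant way, and one of your steps is under-justified.

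\textbf{Different construction of $B$ and $E$.} You take $B$ to be the tail algebra and invoke Takesaki's theorem to obtain $E$. The paper instead defines the \emph{quantum exchangeable subalgebra} $\eu{QE} = \bigcap_n \eu{QE}_n$, where $\eu{QE}_n$ is the fixed-point algebra of the lifted coaction $\widetilde\beta_n$ of $\frk A_s(n)$ on $M_\infty$ (Section~3 is devoted to showing these coactions lift). The conditional expectation $E_{\eu{QE}_n} = (\mathrm{id}\otimes\psi_n)\circ\widetilde\beta_n$ comes for free from Haar integration, and $E_{\eu{QE}}$ is obtained as the strong limit of the $E_{\eu{QE}_n}$ (Proposition~\ref{explim}). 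Only \emph{after} the theorem is proved does the paper verify that $\eu{QE}$ coincides with the tail algebra. This buys the paper two things: no modular theory is needed, so the argument works transparently for non-tracial $\varphi$; and the conditional expectation is given by an explicit formula from the outset, which makes the freeness computation direct.

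\textbf{The modular step is a genuine gap as stated.} Your claim that ``the modular automorphism group of $\varphi|_M$ preserves each $W^*(\rho_m(C):m\geq n)$ by classical exchangeability applied to the modular data'' is not a standard fact and needs real justification. Exchangeability gives that permutation automorphisms commute with $\sigma^\varphi_t$, but this does not by itself force $\sigma^\varphi_t$ to preserve the individual tail filtration algebras. In the tracial case Takesaki is automatic, but the paper explicitly allows non-tracial $\varphi$, and there your argument as written does not go through without further work (e.g.\ via spreadability/shift arguments as in K\"ostler's extended de Finetti paper). The paper's Haar-integration construction sidesteps this entirely.

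\textbf{Your ``main obstacle'' largely dissolves in the paper's approach.} Because $E_{\eu{QE}_n}$ is explicit, Lemma~\ref{expform} gives the closed formula
\[
E_{\eu{QE}_n}^{(\pi)}[\widetilde\rho_{j_1}(\beta_1)\otimes\dotsb\otimes\widetilde\rho_{j_k}(\beta_k)] = n^{-|\pi|}\sum_{\pi\leq\ker\mathbf i}\widetilde\rho_{i_1}(\beta_1)\dotsb\widetilde\rho_{i_k}(\beta_k),
\]
and the Weingarten asymptotics (Lemma~\ref{West}) match this term-by-term with the M\"obius-inverted moment--cumulant expansion. The $B$-bimodule bookkeeping you worry about is handled automatically because the $\beta_l$ are allowed to lie in $C*\eu{QE}$ and the formula above already incorporates them; no separate induction on word length is needed beyond the recursive definition of $E^{(\pi)}$.
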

\noindent In the case that $C = \C \langle t,t^* \rangle$, this is equivalent to the main result in \cite{spekos}.  

As mentioned above, the free de Finetti theorem fails for finite sequences.  We will show that, as in the classical case, any finite quantum exchangeable sequence can be represented as a noncommutative urn sequence of the form given in \cite{spekos}, with respect to a certain conditional expectation (see Proposition \ref{finrep}).  We then consider the question of how far a finite quantum exchangeable sequence is from being free with amalgamation.  We will prove the following approximation:

\begin{thm}\label{weakfinexc}
Let $(\rho_1,\dotsc,\rho_n)$ be a quantum exchangeable sequence of unital $*$-homomorphisms from a unital $*$-algebra $C$ into a W$^*$-probability space $(M,\varphi)$.  Let $M_n$ be the W$^*$-algebra generated by $\rho_i(C)$ for $1 \leq i \leq n$.  Then there is a W$^*$-subalgebra $1 \in B \subset M_n$ and a $\varphi$-preserving conditional expectation $E_n:M_n \to B$ such that if $(\phi_1,\dotsc,\phi_n)$ is a sequence of unital $*$-homomorphisms from $C$ into a $B$-valued probability space $(A,E)$ which is freely independent and identically distributed with respect to $E$, and such that $E \circ \phi_1 = E \circ \rho_1$, then for any $1\leq j_1,\dotsc,j_k \leq n$ and $c_1,\dotsc,c_k \in C$ s.t. $\|\rho_1(c_i)\| \leq 1$ for $1 \leq i \leq k$, we have
\begin{equation*}
 \biggl\|E_n[\rho_{j_1}(c_1)\dotsb \rho_{j_k}(c_k)] - E[\phi_{j_1}(c_1)\dotsb \phi_{j_k}(c_k)\biggr\| \leq \frac{D_k}{n},
\end{equation*}
where $D_k$ is a universal constant which depends only on $k$.

\end{thm}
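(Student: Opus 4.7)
The plan is to reduce the estimate to a careful Weingarten expansion for the quantum permutation group $A_s(n)$ and compare the resulting combinatorial sum with the free moment-cumulant expression for the idealized free sequence $(\phi_1,\dotsc,\phi_n)$.

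First I would invoke Proposition \ref{finrep} to realize $(\rho_1,\dotsc,\rho_n)$ as a noncommutative urn sequence, so that the conditional expectation $E_n$ and the homomorphisms $\rho_i$ can be expressed in terms of the Haar state $h$ on $A_s(n)$ acting on the generators $u_{ij}$. Under this representation a typical moment takes the form
\begin{equation*}
E_n[\rho_{j_1}(c_1)\dotsb\rho_{j_k}(c_k)] = \sum_{i_1,\dotsc,i_k=1}^{n} h(u_{i_1 j_1}\dotsb u_{i_k j_k})\cdot \widetilde{E}[\rho_{i_1}(c_1)\dotsb\rho_{i_k}(c_k)],
\end{equation*}
where the second factor is the underlying $B$-valued moment of the building block, which by construction agrees with $E\circ\phi_1 = E\circ\rho_1$ at the marginal level.

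Next I would apply the Banica--Collins Weingarten formula for $A_s(n)$,
\begin{equation*}
h(u_{i_1 j_1}\dotsb u_{i_k j_k}) \;=\; \sum_{\substack{\pi,\sigma\in NC(k)\\ \pi\leq\ker\mathbf{i},\ \sigma\leq\ker\mathbf{j}}} W_{k,n}(\pi,\sigma),
\end{equation*}
and the parallel operator-valued moment--cumulant formula
\begin{equation*}
E[\phi_{j_1}(c_1)\dotsb\phi_{j_k}(c_k)] \;=\; \sum_{\sigma\in NC(k),\ \sigma\leq\ker\mathbf{j}} \kappa_\sigma^{E}(c_1,\dotsc,c_k),
\end{equation*}
whose cumulants depend only on $E\circ\phi_1 = E\circ\rho_1$ by freeness with amalgamation and identical distribution. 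The asymptotic expansion of Banica--Collins gives
\begin{equation*}
W_{k,n}(\pi,\sigma) = \frac{\mu_{NC}(\pi,\sigma)}{n^{k-|\pi\vee\sigma|}}\bigl(1+O(1/n)\bigr)
\end{equation*}
with implicit constant depending only on $k$, while the index count $\#\{\mathbf{i}:\ker\mathbf{i}\geq\pi\} = n^{|\pi|}+O(n^{|\pi|-1})$ combines with the standard identification of $B$-valued free cumulants as Möbius-transforms of moments to show that the leading-order term of the urn expression is \emph{exactly} the free moment $E[\phi_{j_1}(c_1)\dotsb\phi_{j_k}(c_k)]$.

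The main obstacle is controlling the remainder in operator norm rather than in a seminorm coming from $\varphi$. Since $B$-valued cumulants $\kappa_\sigma^{E}$ do not factor over blocks in a norm-contracting way, the required estimate must be obtained by writing the error as a difference of completely positive maps applied to contractions, invoking the positivity of $h$ and the assumption $\|\rho_1(c_i)\|\leq 1$. The matching condition $E\circ\phi_1 = E\circ\rho_1$ is essential here: it aligns the zeroth-order terms so that the $O(1/n)$ Weingarten remainder, summed against at most $|NC(k)|^2$ partitions and a combinatorial prefactor depending only on $k$, controls the full difference and yields a universal constant $D_k$ of the form $C_k|NC(k)|^2$ for some explicit $C_k$.
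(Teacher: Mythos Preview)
Your high-level plan---express $E_n$ via the urn/Haar formula, expand with the Banica--Collins Weingarten function, and compare against the moment--cumulant expansion for the free sequence---is exactly the paper's strategy. However, several concrete steps in your execution are either misstated or miss the key simplification, and the ``main obstacle'' you identify is a phantom.

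First, the urn formula from Proposition~\ref{finrep} does not have an extra conditional expectation $\widetilde{E}$ on the right. It reads
\[
E_{\eu{QE}_n}[\rho_{j_1}(c_1)\dotsb\rho_{j_k}(c_k)] \;=\; \sum_{i_1,\dotsc,i_k=1}^n \rho_{i_1}(c_1)\dotsb\rho_{i_k}(c_k)\,\psi_n(u_{i_1j_1}\dotsb u_{i_kj_k}),
\]
i.e.\ the summands are genuine products in $M_n$, not $B$-valued moments. Second, your Weingarten asymptotic has the wrong exponent: the relevant estimate (Lemma~\ref{West}) is $W_{kn}(\pi,\sigma)=O(n^{|\pi\vee\sigma|-|\pi|-|\sigma|})$ in general, and $W_{kn}(\pi,\sigma)=\mu_k(\pi,\sigma)n^{-|\pi|}+O(n^{-|\pi|-1})$ when $\pi\leq\sigma$; the power $n^{-(k-|\pi\vee\sigma|)}$ you wrote does not appear.

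The real gap is the step you flag as the ``main obstacle.'' There is no need to control $B$-valued cumulants in norm or to invoke complete positivity. The trick you are missing is Lemma~\ref{expform}: for $\pi\in NC(k)$ with $\pi\leq\ker\mathbf j$,
\[
E_{\eu{QE}_n}^{(\pi)}[\rho_{j_1}(c_1)\otimes\dotsb\otimes\rho_{j_k}(c_k)] \;=\; \frac{1}{n^{|\pi|}}\sum_{\substack{1\leq i_1,\dotsc,i_k\leq n\\ \pi\leq\ker\mathbf i}} \rho_{i_1}(c_1)\dotsb\rho_{i_k}(c_k).
\]
Since $E\circ\phi_1=E_{\eu{QE}_n}\circ\rho_1$, the M\"obius-inverted free moment $E[\phi_{j_1}(c_1)\dotsb\phi_{j_k}(c_k)]$ becomes a sum, over $\sigma\leq\ker\mathbf j$ and $\pi\leq\sigma$, of $\mu_k(\pi,\sigma)n^{-|\pi|}$ times the \emph{same} products $\rho_{i_1}(c_1)\dotsb\rho_{i_k}(c_k)$ that appear in the Weingarten expansion of $E_{\eu{QE}_n}$. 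Both sides are thus linear combinations of identical elements of $M_n$; their difference is bounded in norm by
\[
\sum_{\pi,\sigma\in NC(k)}\bigl|W_{kn}(\pi,\sigma)-\mu_k(\pi,\sigma)n^{-|\pi|}\bigr|\cdot n^{|\pi|},
\]
using only the triangle inequality and $\|\rho_{i}(c_l)\|\leq 1$. Lemma~\ref{West} shows this sum is $O(1/n)$ with a constant depending only on $k$. No positivity argument is required.
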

\noindent We will actually prove a slightly stronger result, see Theorem \ref{finexc}.

The key to our approach is the compact quantum group structure of $A_s(n)$.  We find that for a quantum exchangeable sequence $(\rho_1,\rho_2,\dotsc)$ of unital $*$-homomorphism from the unital $*$-algebra $C$ into a W$^*$-probability space $(M,\varphi)$, there is a natural conditional expectation given by integrating the coaction of $A_s(n)$ with respect to the Haar state.  By using the formula computed by Banica and Collins in \cite{bc} for the Haar state on $A_s(n)$, we can give an explicit form for this conditional expectation.  The structure which emerges from these computations is the operator-valued  moment-cumulant formula of Speicher \cite{memoir}. 

The study of distributional symmetries in probability is a rich subject, of which de Finetti's theorem is just the beginning.  The insight of K\"{o}stler and Speicher to consider distributional invariance under quantum transformations opens up many possibilities for further study.  In particular, we have shown in \cite{qrotations} that operator-valued free centered semicircular and circular families with common variance are characterized by invariance under quantum orthogonal and unitary transformations, respectively.

The paper is organized as follows:  Section 2 contains notations and preliminaries.  We recall the basic definitions and results from free probability, and introduce the quantum permutation group.  In Section 3, we prove a general result on extending coactions of compact quantum groups on $*$-algebras which preserve a state, which will be required in the sequel.  In Section 4, we give a characterization of finite quantum exchangeable sequences and prove a stronger form of Theorem \ref{weakfinexc}.  In Section 5, we consider infinite quantum exchangeable sequences and prove Theorem \ref{definetti}.

\section{Preliminaries and Notations}

\begin{rmk} \textbf{Notations}.  
Let $C$ be a unital $*$-algebra.  Given an index set $I$, we let 
\begin{equation*}
 C_I = \mathop{\ast}_{i \in I} C^{(i)}
\end{equation*}
denote the free product (with amalgamation over $\C$), where for each $i \in I$, $C^{(i)}$ is an isomorphic copy of $C$.  For each $i \in I$, there is a unital $*$-homomorphism $\gamma_i:C \to C_I$ which is an isomorphism onto $C^{(i)}$.  For $c \in C$ and $i \in I$ we denote $c^{(i)} = \gamma_i(c)$. The universal property of the free product is that given a unital $*$-algebra $A$ and a family $(\rho_i)_{i \in I}$ of unital $*$-homomorphisms from $C$ to $A$, there is a unique unital $*$-homomorphism from $C_I$ to $A$, which we denote by $\rho$, such that $\rho \circ \gamma_i = \rho_i$ for each $i \in I$.  Likewise if $\rho:C_I \to A$ is a unital $*$-homomorphism, we let $\rho_i = \rho \circ \gamma_i$.  We will mostly be interested in the case that $I = \{1,\dotsc,n\}$, in which case we denote $C_I$ by $C_n$, and $I = \N$ in which case we denote $C_I = C_\infty$.
\end{rmk}

\begin{rmk}\textbf{Free Probability.} We begin by recalling some basic notions from free probability, the reader is referred to \cite{vdn},\cite{ns} for further information.
\end{rmk}

\begin{defn}\hfill
 \begin{enumerate}
  \item A \textit{noncommutative probabilty space} is a pair $(A,\varphi)$, where $A$ is a unital $*$-algebra and $\varphi$ is a state on $A$.
\item A noncommutative probability space $(M,\varphi)$, where $M$ is a von Neumann algebra and $\varphi$ is a faithful normal state, is called a \textit{W$^*$-probability space}.  We do not require $\varphi$ to be tracial.
 \end{enumerate}

\end{defn}

\begin{defn}
Let $C$ be a unital $*$-algebra, $(A,\varphi)$ a noncommutative probability space and $(\rho_i)_{i \in I}$ a family of unital $*$-homomorphisms from $C$ to $A$.  The \textit{joint distribution} of the family $(\rho_i)_{i \in I}$ is the state $\varphi_\rho$ on $C_I$ defined by $\varphi_\rho = \varphi \circ \rho$.  $\varphi_\rho$ is determined by the \textit{moments}
\begin{equation*}
\varphi_\rho(c_1^{(i_1)}\dotsb c_k^{(i_k)}) = \varphi(\rho_{i_1}(c_1)\dotsb \rho_{i_k}(c_k)),
\end{equation*}
where $c_1,\dotsc,c_k \in C$ and $i_1,\dotsc,i_k \in I$.
\end{defn}

\begin{rmk}\label{motivation}\textit{Examples.}
\begin{enumerate}
\item   Let $(\Omega,\mc F, P)$ be a probability space, let $(S,\mc S)$ be a measure space and $(\xi)_{i \in I}$ a family of $S$-valued random variables on $\Omega$.  Let $A = L^{\infty}(\Omega)$, and let $\varphi:A \to \C$ be the expectation functional
\begin{equation*}
 \varphi(f) = \mb E[f].
\end{equation*}
Let $C$ be the algebra of bounded, complex-valued, $\mc S$-measurable functions on $S$.  For $i \in I$, define $\rho_i:C \to A$ by $\rho_i(f) = f \circ \xi_i$.  Then $\varphi_\rho$ is determined by
\begin{equation*}
 \varphi_\rho(f_1^{(i_1)}\dotsb f_k^{(i_k)}) = \mb E[ f_1(\xi_{i_1})\dotsb f_k(\xi_{i_k})]
\end{equation*}
for $f_1,\dotsc,f_k \in C$ and $i_1,\dotsc,i_k \in I$. 
\item   Let $C = \C\langle t,t^* \rangle$, and let $(x_i)_{i \in I}$ be a family of random variables in $A$.  Define $\rho_i:C \to A$ to be the unique unital $*$-homomorphism such that $\rho_i(t) = x_i$.  Then $C_I = C \langle t_i,t_i^*: i \in I\rangle$, and we recover the usual definitions of the joint distribution and moments of the family $(x_i)_{i \in I}$.
 
\end{enumerate}
\end{rmk}

\begin{rmk}
These definitions have natural ``operator-valued'' extensions given by replacing $\C$ by a more general algebra of scalars.  This is the right setting for the notion of freeness with amalgamation, which is the analogue of conditional independence in free probability.
\end{rmk}

\begin{defn}
A \textit{$B$-valued probability space} $(A,E)$ consists of a unital $*$-algebra $A$, a $*$-subalgebra $1 \in B \subset A$, and a conditional expectation $E:A \to B$, i.e. $E$ is a linear map such that $E[1] = 1$ and
\begin{equation*}
 E[b_1ab_2] = b_1E[a]b_2
\end{equation*}
for all $b_1,b_2 \in B$ and $a \in A$. 
\end{defn}

\begin{defn}
Let $C$ be a unital $*$-algebra, $(A,E)$ a $B$-valued probability space and $(\rho_i)_{i \in I}$ a family of unital $*$-homomorphisms from $C$ into $A$.  
\begin{enumerate}
\item  We let $C_I^{B}$ denote the free product over $i \in I$, with amalgamation over $B$, of $C^{(i)} * B$, which is naturally isomorphic to $C_I * B$.  For each $i \in I$, we extend $\rho_i$ to a unital $*$-homomorphism $\widetilde \rho_i:C * B \to A$ by setting $\widetilde \rho_i = \rho_i * \mathrm{id}$. We then let $\widetilde \rho$ denote the induced unital $*$-homomorphism from $C_I^{B}$ into $A$, which is naturally identified with $\rho *\mathrm{id}$.
\item The \textit{$B$-valued joint distribution} of the family $(\rho_i)_{i \in I}$ is the linear map $E_\rho:C_I * B \to B$ defined by $E_\rho = E \circ \widetilde \rho$.  $E_\rho$ is determined by the \textit{$B$-valued moments}
\begin{equation*}
 E_\rho[b_0c_1^{(i_1)}\dotsb c_k^{(i_k)}b_k] = E[b_0\rho_{i_1}(c_1)\dotsb \rho_{i_k}(c_k)b_k]
\end{equation*}
for $c_1,\dotsc,c_k \in C$, $b_0,\dotsc,b_k \in B$ and $i_1,\dotsc,i_k \in I$.
\item The family $(\rho_i)_{i \in I}$ is called \textit{identically distributed with respect to $E$} if $E \circ \widetilde \rho_i = E \circ \widetilde \rho_j$ for all $i,j \in I$.  This is equivalent to the condition that
\begin{equation*}
 E[b_0\rho_i(c_1)\dotsb \rho_i(c_k)b_k] = E[b_0\rho_j(c_1)\dotsb \rho_j(c_k)b_k]
\end{equation*}
for any $i,j \in I$ and $c_1,\dotsc,c_k \in C$, $b_0,\dotsc,b_k \in B$.
\item The family $(\rho_i)_{i \in I}$ is called \textit{freely independent with respect to $E$}, or \textit{free with amalgamation over $B$}, if
\begin{equation*}
 E[\widetilde \rho_{i_1}(\beta_1)\dotsb \widetilde \rho_{i_k}(\beta_k)] = 0
\end{equation*}
whenever $i_1 \neq \dotsb \neq i_k \in I$, $\beta_1,\dotsc,\beta_k \in C * B$ and $E[\widetilde \rho_{i_l}(\beta_l)] = 0$ for $1 \leq l \leq k$.
\end{enumerate}
\end{defn}

\begin{rmk}
In \cite{memoir}, Speicher developed a combinatorial theory for freeness with amalgamation.  The basic objects, which we will now recall, are non-crossing set partitions and free cumulants.  For further information on the combinatorial aspects of free probability, the reader is referred to \cite{ns}.  
\end{rmk}

\begin{defn}\hfill

\begin{enumerate}
\renewcommand{\labelenumi}{(\roman{enumi})}
\item A \textit{partition} $\pi$ of a set $S$ is a collection of disjoint, non-empty sets $V_1,\dotsc,V_r$ such that $V_1 \cup \dotsb \cup V_r = S$.  $V_1,\dotsc,V_r$ are called the \textit{blocks} of $\pi$, and we set $|\pi| = r$. The collection of partitions of $S$ will be denoted $\mc P(S)$, or in the case that $S =\{1,\dotsc,k\}$ by $\mc P(k)$.
\item Given $\pi,\sigma \in \mc P(S)$, we say that $\pi \leq \sigma$ if each block of $\pi$ is contained in a block of $\sigma$.  There is a least element of $\mc P(S)$ which is larger than both $\pi$ and $\sigma$, which we denote by $\pi \vee \sigma$.  
\item If $S$ is ordered, we say that $\pi \in \mc P(S)$ is \textit{non-crossing} if whenever $V,W$ are blocks of $\pi$ and $s_1 < t_1 < s_2 < t_2$ are such that $s_1,s_2 \in V$ and $t_1,t_2 \in W$, then $V = W$.  The set of non-crossing partitions of $S$ is denoted by $NC(S)$, or by $NC(k)$ in the case that $S = \{1,\dotsc,k\}$.

\item The non-crossing partitions can also be defined recursively, a partition $\pi \in \mc P(S)$ is non-crossing if and only if it has a block $V$ which is an interval, such that $\pi \setminus V$ is a non-crossing partition of $S \setminus V$.
\item  Given $i_1,\dotsc,i_k$ in some index set $I$, we denote by $\ker \mathbf i$ the element of $\mc P(k)$ whose blocks are the equivalence classes of the relation
\begin{equation*}
 s \sim t \Leftrightarrow i_s= i_t.
\end{equation*}
Note that if $\pi \in \mc P(k)$, then $\pi \leq \ker \mathbf i$ is equivalent to the condition that whenever $s$ and $t$ are in the same block of $\pi$, $i_s$ must equal $i_t$.

\end{enumerate}
\end{defn}

\begin{defn}
Let $(A,E)$ be a $B$-valued probability space. 
\begin{enumerate}
\renewcommand{\labelenumi}{(\roman{enumi})}
\item For each $k \in \N$, let $\rho^{(k)}:A^{\otimes_B k} \to B$ be a linear map (the tensor product is with respect to the natural $B-B$ bimodule structure on $A$).  For $n \in \N$ and $\pi \in NC(n)$, we define a linear map $\rho^{(\pi)}: A^{\otimes_B n} \to B$ recursively as follows.  If $\pi$ has only one block, we set
\begin{equation*}
 \rho^{(\pi)}[a_1 \otimes \dotsb \otimes a_n] = \rho^{(n)}(a_1 \otimes \dotsb \otimes a_n)
\end{equation*}
for any $a_1,\dotsc,a_n \in A$.  Otherwise, let $V = \{l+1,\dotsc,l+s\}$ be an interval of $\pi$.  We then define, for any $a_1,\dotsc,a_n \in A$, 
\begin{equation*}
 \rho^{(\pi)}[a_1 \otimes \dotsb \otimes a_n] = \rho^{(\pi \setminus V)}[a_1 \otimes \dotsb \otimes a_{l}\rho^{(s)}(a_{l+1} \otimes \dotsb \otimes a_{l+s}) \otimes \dotsb \otimes a_n].
\end{equation*}

\item For $k \in \N$, define the \textit{$B$-valued moment functions} $E^{(k)}:A^{\otimes_B k} \to B$ by
\begin{equation*}
 E^{(k)}[a_1 \otimes \dotsb \otimes a_k] = E[a_1\dotsb a_k].
\end{equation*}

\item The \textit{$B$-valued cumulant functions} $\kappa_E^{(k)}:A^{\otimes_B k} \to B$ are defined recursively for $\pi \in NC(k)$, $k \geq 1$, by the \textit{moment-cumulant formula}: for each $n \in \N$ and $a_1,\dotsc,a_n \in A$ we have
\begin{equation*}
 E[a_1\dotsb a_n] = \sum_{\pi \in NC(n)} \kappa_E^{(\pi)}[a_1 \otimes \dotsb \otimes a_n].
\end{equation*}
Note that the right hand side of this formula is equal to $\kappa_E^{(n)}(a_1 \otimes \dotsb \otimes a_n)$ plus lower order terms, and hence can be recursively solved for $\kappa_E^{(n)}$. 
\end{enumerate} 
\end{defn}

\begin{rmk}
The cumulant functions can be solved for in terms of the moment functions by the following formula (\cite{memoir}): for each $n \in N$ and $a_1,\dotsc,a_n \in A$,
\begin{equation*}
 \kappa_E^{(\pi)}[a_1 \otimes \dotsb \otimes a_n] = \sum_{\substack{\sigma \in NC(n)\\ \sigma \leq \pi}} \mu_n(\sigma,\pi)E^{(\sigma)}[a_1 \otimes \dotsb \otimes a_n],
\end{equation*}
where $\mu_n$ is the \textit{M\"{o}bius function} on the partially ordered set $NC(n)$.  There are several ways of characterizing $\mu_n$, for our purposes we use the explicit formula
\begin{equation*}
 \mu_n(\sigma,\pi) = \begin{cases}0, & \sigma \not\leq \pi\\ 1, & \sigma = \pi\\ -1 + \sum_{l \geq 1}(-1)^{l+1}|\{(\nu_1,\dotsc,\nu_l) \in NC(k)^l:\sigma < \nu_1 < \dotsb < \nu_l < \pi \}|, & \sigma < \pi \end{cases}.
\end{equation*}

The key relation between $B$-valued cumulant functions and free independence with amalgamation is that freeness can be characterized in terms of the ``vanishing of mixed cumulants''.
\end{rmk}

\begin{thm}\textnormal{(\cite{memoir})} Let $C$ be a unital $*$-algebra, $(A,E)$ be a $B$-valued probability space and $(\rho_i)_{i \in I}$ a family of unital $*$-homomorphisms from $C$ into $A$.  Then the family $(\rho_i)_{i \in I}$ is free with amalgamation over $B$ if and only if
\begin{equation*}
 \kappa_{E}^{(\pi)}[\widetilde \rho_{i_1}(\beta_1)\otimes \dotsb \otimes \widetilde \rho_{i_k}(\beta_k)] = 0
\end{equation*}
whenever $i_1,\dotsc,i_k \in I$, $\beta_1,\dotsc,\beta_k \in C * B$ and $\pi \in NC(k)$ is such that $\pi \not\leq \ker \mathbf i$.
\end{thm}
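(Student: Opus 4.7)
Both implications rest on the moment-cumulant formula, which bridges the alternating-centered description of freeness and the cumulant statement.

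For the implication that vanishing of mixed cumulants implies freeness, I would check the alternating centered moment condition directly. Fix indices $i_1 \neq i_2 \neq \dotsb \neq i_k$ and elements $\beta_1,\dotsc,\beta_k \in C * B$ with $E[\widetilde\rho_{i_l}(\beta_l)] = 0$. Expanding
\begin{equation*}
E[\widetilde\rho_{i_1}(\beta_1)\dotsb \widetilde\rho_{i_k}(\beta_k)] = \sum_{\pi \in NC(k)} \kappa_E^{(\pi)}[\widetilde\rho_{i_1}(\beta_1)\otimes \dotsb \otimes \widetilde\rho_{i_k}(\beta_k)],
\end{equation*}
the hypothesis restricts the surviving terms to $\pi \leq \ker\mathbf i$. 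Since no two adjacent indices coincide, no block of such a $\pi$ contains adjacent positions; since every non-crossing partition has an interval block, that interval must be a singleton $\{l\}$. The recursive definition of $\kappa^{(\pi)}$ factors out $\kappa^{(1)}(\widetilde\rho_{i_l}(\beta_l)) = E[\widetilde\rho_{i_l}(\beta_l)] = 0$, which kills every remaining term.

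For the converse, I would proceed by strong induction on $k$, in the spirit of \cite{memoir}. The case $k=1$ is vacuous. For the inductive step, fix $\pi \in NC(k)$ with $\pi \not\leq \ker\mathbf i$. If $\pi$ has more than one block, then some block $V$ of $\pi$ witnesses $\pi \not\leq \ker\mathbf i$, i.e. the indices $i_{v}$ for $v \in V$ are not all equal; since $|V| < k$, the inductive hypothesis applies to the cumulant $\kappa^{(|V|)}$ corresponding to $V$ appearing inside the nested product that defines $\kappa^{(\pi)}$, and forces it (and hence $\kappa^{(\pi)}$) to vanish. The remaining case is $\pi = 1_k$ with $\ker\mathbf i < 1_k$. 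Here I would solve the moment-cumulant formula for $\kappa^{(k)}$, use the inductive vanishing of all strictly smaller mixed cumulants to eliminate the $\pi \not\leq \ker\mathbf i$ terms on the right, and finally compute the remaining moment $E[\widetilde\rho_{i_1}(\beta_1)\dotsb\widetilde\rho_{i_k}(\beta_k)]$ by centering each $\widetilde\rho_{i_l}(\beta_l)$ and applying the moment-freeness condition to the resulting alternating centered words.

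The main obstacle is this last step: the centering and the merging of consecutive equal-indexed factors must be organized so that the $B$-bimodule structure is preserved throughout, so that singleton cumulants absorb as $B$-valued coefficients into adjacent positions, and so that every intermediate reduction either matches the freeness moment condition or falls under the inductive hypothesis. Once this combinatorial bookkeeping is in place the argument is essentially that of Speicher.
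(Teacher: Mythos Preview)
The paper does not give a proof of this theorem; it is quoted from Speicher's memoir \cite{memoir} and used as a black box (the next remark simply draws Corollary~\ref{vancum} from it). So there is no proof in the paper to compare your sketch against. Your outline is essentially the standard argument from \cite{memoir}.

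One small imprecision worth flagging in your multi-block case: the block $V$ witnessing $\pi \not\leq \ker\mathbf i$ need not itself be an interval block, so it is not necessarily the block factored out first in the recursive definition of $\kappa_E^{(\pi)}$. The clean way to run the induction is to argue by cases on whatever interval block $W$ the recursion selects. If the indices on $W$ are not all equal, then $\kappa_E^{(|W|)}$ on that block vanishes by the inductive hypothesis and you are done. If the indices on $W$ are constant, then $W$ lies inside a block of $\ker\mathbf i$, so the offending block of $\pi$ survives in $\pi\setminus W$; moreover the modified entry $\widetilde\rho_{i_l}(\beta_l)\cdot\kappa_E^{(|W|)}(\dotsc)$ is still of the form $\widetilde\rho_{i_l}(\beta_l')$ with $\beta_l' \in C*B$ (absorb the $B$-valued factor), and induction applies to $\kappa_E^{(\pi\setminus W)}$ on $k-|W|<k$ positions. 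With that adjustment your sketch is sound, including your honest acknowledgment that the $\pi=1_k$ case requires careful $B$-bimodule bookkeeping when centering and merging adjacent equal-index factors.
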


\begin{rmk}
The next corollary, which follows immediately from the theorem above and moment-cumulant formula, will be used in our proof of the free de Finetti theorem.
\end{rmk}

\begin{cor}\label{vancum}
Let $C$ be a unital $*$-algebra, $(A,E)$ a $B$-valued probability space and $(\rho_i)_{i \in I}$ a family of unital $*$-homomorphisms from $C$ into $A$.  Then $(\rho_i)_{i \in I}$ is freely independent with respect to $E$ if and only if
\begin{equation*}
 E[\widetilde \rho_{i_1}(\beta_1)\dotsb \widetilde \rho_{i_k}(\beta_k)] = \sum_{\substack{\pi \in NC(k)\\ \pi \leq \ker \mathbf i}} \kappa_E^{(\pi)}[\widetilde \rho_{i_1}(\beta_1) \otimes \dotsb \otimes \widetilde \rho_{i_k}(\beta_k)]
\end{equation*}
for every $\beta_1,\dotsc,\beta_k \in C * B$ and $i_1,\dotsc,i_k \in I$.
\end{cor}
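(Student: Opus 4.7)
The corollary is a direct combination of two ingredients already on the table: the preceding theorem characterizing freeness as the vanishing of mixed cumulants, and the moment-cumulant formula that defines the $\kappa_E^{(\pi)}$. My plan is to handle the two implications separately; the forward direction is immediate, and the converse is a short induction.

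For the forward implication, assume $(\rho_i)_{i \in I}$ is free over $B$. The preceding theorem gives $\kappa_E^{(\pi)}[\widetilde\rho_{i_1}(\beta_1) \otimes \dotsb \otimes \widetilde\rho_{i_k}(\beta_k)] = 0$ whenever $\pi \not\leq \ker \mathbf{i}$. Substituting this into the full moment-cumulant formula
\begin{equation*}
E[\widetilde\rho_{i_1}(\beta_1)\dotsb \widetilde\rho_{i_k}(\beta_k)] = \sum_{\pi \in NC(k)} \kappa_E^{(\pi)}[\widetilde\rho_{i_1}(\beta_1)\otimes \dotsb \otimes \widetilde\rho_{i_k}(\beta_k)]
\end{equation*}
immediately collapses the sum to those $\pi$ with $\pi \leq \ker \mathbf{i}$.

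For the converse, I want to show that the stated identity forces the vanishing of all mixed cumulants, after which the preceding theorem gives freeness. Since $\kappa_E^{(\pi)}$ is built by nesting the block-wise cumulants $\kappa_E^{(|V|)}$ over the blocks $V$ of $\pi$, it is enough to prove the following: for every $k \geq 1$, every $i_1,\dotsc,i_k \in I$ that are not all equal, and every $\beta_1,\dotsc,\beta_k \in C*B$,
\begin{equation*}
\kappa_E^{(k)}[\widetilde\rho_{i_1}(\beta_1) \otimes \dotsb \otimes \widetilde\rho_{i_k}(\beta_k)] = 0.
\end{equation*}
(Then for a general $\pi \not\leq \ker \mathbf{i}$ some block $V$ of $\pi$ has $i$-indices that are not all equal, and the nested definition of $\kappa_E^{(\pi)}$ zeroes out.)

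I will prove this by induction on $k$. The base $k=1$ is vacuous. For the inductive step, apply the full moment-cumulant formula, splitting off the one-block term:
\begin{equation*}
E[\widetilde\rho_{i_1}(\beta_1)\dotsb \widetilde\rho_{i_k}(\beta_k)] = \kappa_E^{(k)}[\widetilde\rho_{i_1}(\beta_1)\otimes \dotsb \otimes \widetilde\rho_{i_k}(\beta_k)] + \sum_{\pi \in NC(k),\, \pi \neq 1_k} \kappa_E^{(\pi)}[\widetilde\rho_{i_1}(\beta_1)\otimes \dotsb \otimes \widetilde\rho_{i_k}(\beta_k)].
\end{equation*}
Every $\pi \neq 1_k$ has all blocks of size strictly less than $k$, so the inductive hypothesis (combined with the nested definition of $\kappa_E^{(\pi)}$) forces $\kappa_E^{(\pi)}[\dotsb] = 0$ unless $\pi \leq \ker \mathbf{i}$. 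Comparing with the hypothesis of the corollary, which gives $E[\dotsb] = \sum_{\pi \leq \ker \mathbf{i}} \kappa_E^{(\pi)}[\dotsb]$, and using that $1_k \not\leq \ker \mathbf{i}$ when the $i_j$'s are not all equal, the two expressions for $E[\dotsb]$ match precisely when $\kappa_E^{(k)}[\dotsb] = 0$.

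There is no serious obstacle here; the only thing to be careful about is the logical flow of the inductive step (that we must use the inductive hypothesis to kill the non-block cumulants before we can isolate $\kappa_E^{(k)}$) and the reduction from general $\pi \not\leq \ker \mathbf{i}$ to the one-block case via the block-nesting definition of $\kappa_E^{(\pi)}$.
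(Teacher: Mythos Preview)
Your proof is correct and matches the paper's approach: the paper simply states that the corollary ``follows immediately from the theorem above and moment-cumulant formula,'' and your argument fleshes out exactly that, including the short induction needed for the converse direction. The only detail worth noting explicitly is the one you already handle implicitly---that in the recursive evaluation of $\kappa_E^{(\pi)}$, absorbing the $B$-valued output of an inner block into a neighbouring argument keeps each entry in the form $\widetilde\rho_{i_j}(\beta_j')$ with the same index $i_j$, so the inductive hypothesis still applies at the outer level.
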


\begin{rmk}\textbf{Compact Quantum Groups.}  Here we give the basic definitions of compact quantum groups and their coactions.  For further details the reader is referred to \cite{timmerman}. 
\end{rmk}

\begin{defn}
A \textit{compact quantum group} $(A,\Delta)$ is a unital C$^*$-algebra $A$, together with a unital $*$-homomorphism $\Delta:A \to A \otimes A$ such that
\begin{enumerate}
\renewcommand{\labelenumi}{(\roman{enumi})} 
\item $\Delta$ is coassociative, i.e. $(\text{id} \otimes \Delta) \circ \Delta = (\Delta \otimes \text{id}) \circ \Delta$.
\item $\Delta(A)(1 \otimes A)$ and $\Delta(A)(A \otimes 1)$ are both linearly dense in $A \otimes A$.
\end{enumerate}
\end{defn}

\begin{rmk}\textit{Remarks}.
\begin{enumerate}
\renewcommand{\labelenumi}{(\arabic{enumi})}
\item Tensor products of C$^*$-algebras will be taken to be spatial throughout this paper.
\item $A$ has a canonical dense $*$-subalgebra $\mc A$, consisting of matrix elements of finite dimensional corepresentation operators, which has the structure of a Hopf $*$-algebra.
\item It is a fundamental theorem of Woronowicz \cite{woronowicz}, strengthened later by Van Daele \cite{vandaele}, that every compact quantum group has a unique \textit{Haar state}, $h$, which is left and right invariant, i.e.
\begin{equation*}
 \left(h \otimes \text{id}\right) \Delta(a) = h(a)1_{A_s(n)} = \left(\text{id} \otimes h\right)\Delta(a)
\end{equation*}
for any $a \in A$.
\end{enumerate}

\end{rmk}

\begin{defn}
A \textit{Hopf von Neumann algebra} is a W$^*$-algebra $\frk A$ with a normal homomorphism $\Delta:\frk A \to \frk A \overline \otimes \frk A$ which is coassociative, and a faithful normal state $h$ which is left and right invariant in the sense that
\begin{equation*}
 \left(h \otimes \text{id}\right) \Delta(a) = h(a)1_{\frk A} = \left(\text{id} \otimes h\right) \Delta(a)
\end{equation*}
for every $a \in \frk A$.
\end{defn}

\begin{rmk}\textit{Remarks.}
 \begin{enumerate}
  \item The tensor product $\overline \otimes$ of von Neumann algebras is taken to be spatial throughout this paper.
\item If $(A,\Delta)$ is a compact quantum group with Haar state $h$, then letting $\pi_h$ denote the GNS representation for $h$, $\frk A = \pi_h(A)''$ is a Hopf von Neumann algebra with the natural induced structure.
 \end{enumerate}

\end{rmk}

\begin{defn}
If $B$ is a C$^*$-algebra, a \textit{right coaction} of the compact quantum group $(A,\Delta)$ on $B$ is a $*$-homomorphism $\alpha:B \to B \otimes A$ such that
\begin{enumerate}
 \renewcommand{\labelenumi}{(\roman{enumi})} 
\item $(\text{id} \otimes \Delta) \circ \alpha = (\alpha \otimes \text{id}) \circ \alpha$.
\item There is a dense $*$-subalgebra $\mc B$ of $B$ such that
\begin{equation*}
 \alpha(\mc B) \subset \mc B \otimes \mc A,
\end{equation*}
and
\begin{equation*}
 \left(\text{id} \otimes \epsilon\right)\alpha(b) = b, \ms \ms \ms \ms (b \in \mc B),
\end{equation*}
where $\epsilon$ is the counit of the canonical dense $*$-algebra $\mc A$.
\end{enumerate}

\end{defn}

\begin{rmk}
This definition is taken from \cite{qpermutations}, condition (ii) above is often replaced by the equivalent condition that $\alpha(B)(1 \otimes A)$ is linearly dense in $B \otimes A$.  
\end{rmk}

\begin{defn}
A \textit{right coaction} of a Hopf von Neumann algebra $(\frk A,\Delta)$ on a von Neumann algebra $\frk B$ is a normal homomorphism $\alpha:\frk B \to \frk B \overline \otimes \frk A$ such that
\begin{enumerate}
\renewcommand{\labelenumi}{(\roman{enumi})} 
\item $(\text{id} \otimes \Delta)\circ \alpha = (\alpha \otimes \text{id}) \circ \alpha$.
\item $\alpha(\frk B)(1 \otimes \frk A)$ is linearly dense in $\frk B \overline \otimes \frk B$.
\end{enumerate}

\end{defn}

\begin{defn}
Given a right coaction $\alpha$ of a compact quantum group (resp. Hopf von Neumann algebra) $(A,\Delta)$ on a C$^*$-algebra (resp. von Neumann algebra) $B$, the \textit{fixed point algebra} $B^\alpha$ of the action $\alpha$ is
\begin{equation*}
 B^\alpha = \{b \in B: \alpha(b) = b \otimes 1\}.
\end{equation*}
A bounded (resp. ultraweakly continuous) linear functional $\phi$ on $B$ is said to be \textit{invariant} under $\alpha$ if
\begin{equation*}
 (\phi \otimes \text{id}) \alpha(b) = \phi(b)1_A.
\end{equation*}
for any $b \in B$.
\end{defn}

\begin{rmk}\textbf{Quantum Permutation Group.}  Wang introduced the following noncommutative analogue of $S_n$ in \cite{qpermutations}, and showed that it is the quantum automorphism group of a set with $n$ points.  For further information see \cite{bbc},\cite{bc}.
\end{rmk}

\begin{defn}A matrix $(u_{ij})_{1 \leq i,j, \leq n} \in M_n(A)$, where $A$ is a unital C$^*$-algebra, is called a \textit{magic unitary} if

\begin{enumerate}

\renewcommand{\labelenumi}{(\roman{enumi})}
 \item $u_{ij}$ is a projection for each $1 \leq i,j \leq n$.
\item $u_{ik}u_{il} = 0$ and $u_{kj}u_{lj} = 0$ if $1 \leq i,j,k,l \leq n$ and $k \neq l$.
\item For each $1 \leq i,j \leq n$,
\begin{align*}
 \sum_{k=1}^n u_{ik} &= 1, & \sum_{k=1}^n u_{kj} &= 1.
\end{align*}
\end{enumerate}
The \textit{quantum permutation group} $A_s(n)$ is defined as the universal C$^*$-algebra generated by elements $\{u_{ij}: 1 \leq i,j \leq n\}$ such that $(u_{ij})$ is a magic unitary.  $A_s(n)$ is a compact quantum group with comultipication, counit and antipode given by
\begin{align*}
\Delta(u_{ij}) &= \sum_{k=1}^n u_{ik} \otimes u_{kj}\\
\epsilon(u_{ij}) &= \delta_{ij}\\
S(u_{ij}) &= u_{ji}.
\end{align*}
The existence of these maps is given by the universal property of $A_s(n)$.  

For each $n$ there is a surjective unital $*$-homomorphism $\omega_n:A_s(n+1) \to A_s(n)$, determined by
\begin{equation*}
 \omega_n(u_{ij}) = \begin{cases} u_{ij}, & 1 \leq i,j \leq n\\ 1, & \text{otherwise} \end{cases}.
\end{equation*}
It is easy to see that
\begin{equation*}
 \Delta \circ \omega_n = (\omega_n \otimes \omega_n) \circ \Delta,
\end{equation*}
i.e. $A_s(n)$ is a quantum subgroup of $A_s(n+1)$.

The canonical dense $*$-Hopf algebra $\mc A_s(n)$ of $A_s(n)$ is the $*$-algebra generated by the elements $u_{ij}$.  We will denote the unique Haar state on $A_s(n)$ by $\psi_n$, and the Hopf von Neumann algebra $\pi_{\psi_n}(A_s(n))''$ by $\frk A_s(n)$, where $\pi_{\psi_n}$ is the GNS representation for $\psi_n$.

For $n = 1,2,3$, $A_s(n)$ is just $C(S_n)$.  For $n \geq 4$, $A_s(n)$ is noncommutative and infinite dimensional.
\end{defn}

\begin{rmk} \textbf{The Haar State.}  
For $n \geq 4$, an explicit formula for $\psi_n$ has been given in \cite{bc}.  Let $G_{kn}(\pi,\sigma)$ be the matrix indexed by $\pi,\sigma \in NC(k)$, such that
\begin{equation*}
 G_{kn}(\pi,\sigma) = n^{|\pi \vee \sigma|}.
\end{equation*}
Note that join is taken in the lattice $\mc P(k)$, so that $\pi \vee \sigma$ is not necessarily non-crossing.  Let $W_{kn} = G_{kn}^{-1}$, then for any $1 \leq i_1,j_1,\dotsc,i_k,j_k \leq n$ we have the integration formula
\begin{equation*}
 \psi_n\left(u_{i_1j_1}\dotsb u_{i_kj_k}\right) = \sum_{\substack{\pi,\sigma \in NC(k)\\ \pi \leq \ker \mathbf i\\ \sigma \leq \ker \mathbf j}} W_{kn}(\pi,\sigma)
\end{equation*}
In particular,
\begin{equation*}
\psi_n\left(u_{ij}\right) = \frac{1}{n}
\end{equation*}
for $1 \leq i,j \leq n$.  For $n = 1,2,3$ the Haar state on $A_s(n) = C(S_n)$ is given by integrating against the Haar measure on $S_n$.
\end{rmk}
\section{Lifting coactions with invariant states}

\noindent
In this section we give a general result on extending coactions of compact quantum groups on unital $*$-algebras which preserve a state.  This builds upon results in \cite{ergodic}.  

\begin{prop}
Let $(A,\Delta)$ be a compact quantum group with canonical dense Hopf $*$-subalgebra $\mc A$.  Let $\mc B$ be a $*$-algebra and $\alpha:\mc B \to \mc B \otimes \mc A$ a right coaction.  Let $\varphi$ be a state on $\mc B$ such that $\mc B$ acts by bounded operators on the GNS Hilbert space $(\mc H_\varphi,\xi_\varphi)$.  Let $\pi_\varphi$ denote the GNS representation of $\mc B$ on $\mc H_\varphi$ and set $B=C^*(\pi_\varphi(\mc B))$.  If $\varphi$ is invariant under $\alpha$, then $\alpha$ lifts to a right coaction $\widetilde \alpha:B \to B \otimes A$, determined by
\begin{equation*}
 \widetilde \alpha(\pi_\varphi(b)) = (\pi_\varphi \otimes \mathrm{id})\alpha(b)
\end{equation*}
for $b \in \mc B$.  Moreover, the fixed point algebra $B^{\widetilde \alpha}$ is the norm closure of $\pi_\varphi(\mc B^{\alpha})$.
\end{prop}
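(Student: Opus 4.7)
The plan is to construct a unitary $U$ on $\mc H_\varphi \otimes \mc H_h$ (with $h$ the Haar state of $A$, $\mc H_h$ its GNS space, $\xi_h$ the cyclic vector, and $\sigma$ the corresponding GNS representation of $A$) that conjugates $\pi_\varphi(b) \otimes 1$ to $(\pi_\varphi \otimes \sigma)\alpha(b)$. Once $U$ is available, the right-hand side depends only on $\pi_\varphi(b)$, so the candidate assignment $\pi_\varphi(b) \mapsto (\pi_\varphi \otimes \mathrm{id})\alpha(b)$ is well-defined and isometric for the $C^*$-norm; since its image on $\pi_\varphi(\mc B)$ already lies in $\pi_\varphi(\mc B) \otimes \mc A \subset B \otimes A$, it extends by continuity to the desired $*$-homomorphism $\widetilde\alpha: B \to B \otimes A$.

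I define $U$ on the dense subspace $\pi_\varphi(\mc B)\xi_\varphi \otimes \mc A \subset \mc H_\varphi \otimes \mc H_h$ (using the inclusion $\mc A \hookrightarrow \mc H_h$ given by $a \mapsto \sigma(a)\xi_h$) by
\[
U(\pi_\varphi(b)\xi_\varphi \otimes a) = (\pi_\varphi \otimes \sigma)\alpha(b)(\xi_\varphi \otimes \sigma(a)\xi_h).
\]
Using $\alpha(b^*b) = \alpha(b)^*\alpha(b)$ and the invariance $(\varphi \otimes \mathrm{id})\alpha(b^*b) = \varphi(b^*b)1_A$, a direct computation yields
\[
\|U(\pi_\varphi(b)\xi_\varphi \otimes a)\|^2 = (\varphi \otimes h)\bigl((1 \otimes a^*)\alpha(b^*b)(1 \otimes a)\bigr) = \varphi(b^*b)\,h(a^*a),
\]
so $U$ extends to an isometry. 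For surjectivity I invoke the algebraic Podle\'s density $\alpha(\mc B)(1 \otimes \mc A) = \mc B \otimes \mc A$, which holds for any coaction of a Hopf $*$-algebra on a unital $*$-algebra: the linear map $b \otimes a \mapsto \alpha(b)(1 \otimes a)$ has explicit inverse $c \otimes a \mapsto \sum c_{(0)} \otimes S(c_{(1)})a$ in Sweedler notation, by the antipode axiom together with coassociativity of $\alpha$. Writing a general $c \otimes a' = \sum_k \alpha(b_k)(1 \otimes a_k)$, one checks that $\sum_k U(\pi_\varphi(b_k)\xi_\varphi \otimes a_k) = \pi_\varphi(c)\xi_\varphi \otimes \sigma(a')\xi_h$, so $U$ has dense range and is therefore unitary.

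The intertwining identity $U(\pi_\varphi(b) \otimes 1) = (\pi_\varphi \otimes \sigma)\alpha(b) \cdot U$ is immediate from the defining formula and $\alpha(bc) = \alpha(b)\alpha(c)$, yielding $(\pi_\varphi \otimes \sigma)\alpha(b) = U(\pi_\varphi(b) \otimes 1)U^*$ and hence well-definedness of $\widetilde\alpha$. Coassociativity $(\widetilde\alpha \otimes \mathrm{id})\widetilde\alpha = (\mathrm{id} \otimes \Delta)\widetilde\alpha$ and the Podle\'s-type density for $\widetilde\alpha$ follow from the corresponding algebraic identities for $\alpha$ on the dense $*$-subalgebra $\pi_\varphi(\mc B)$ by norm continuity. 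For the fixed point algebra, the averaging map $\widetilde E := (\mathrm{id} \otimes h)\widetilde\alpha$ is a norm-contractive projection of $B$ onto $B^{\widetilde\alpha}$ (by Haar invariance), and on $\pi_\varphi(\mc B)$ it restricts to $\widetilde E(\pi_\varphi(b)) = \pi_\varphi((\mathrm{id} \otimes h)\alpha(b)) \in \pi_\varphi(\mc B^\alpha)$. Since $\pi_\varphi(\mc B)$ is dense in $B$, this forces $\widetilde E(B) \subset \overline{\pi_\varphi(\mc B^\alpha)}$, and combined with the obvious inclusion $\overline{\pi_\varphi(\mc B^\alpha)} \subset B^{\widetilde\alpha}$ this yields the claimed equality.

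The main obstacle is the unitarity of $U$: isometricity follows cleanly from invariance of $\varphi$, but surjectivity rests on the algebraic Podle\'s density condition for a Hopf-algebra coaction, which in turn is a direct but essential use of the antipode $S$ of $\mc A$.
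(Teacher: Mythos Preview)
Your strategy via a unitary implementation is very close in spirit to the paper's proof, and the Podle\'s--type density/antipode argument you invoke for surjectivity of $U$ is exactly the lemma the paper establishes. Your fixed-point-algebra argument via Haar averaging is also correct (the paper simply cites \cite{ergodic} for that step).

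There is, however, a genuine gap in the boundedness step. From $U(\pi_\varphi(b)\otimes 1)U^*=(\pi_\varphi\otimes\sigma)\alpha(b)$ you correctly obtain
\[
\bigl\|(\pi_\varphi\otimes\sigma)\alpha(b)\bigr\|_{B(\mc H_\varphi\otimes\mc H_h)}=\|\pi_\varphi(b)\|,
\]
but what is needed to extend $\widetilde\alpha$ continuously from $\pi_\varphi(\mc B)$ to $B$ is the \emph{upper} bound $\|(\pi_\varphi\otimes\mathrm{id})\alpha(b)\|_{B\otimes A}\le\|\pi_\varphi(b)\|$. The map $\mathrm{id}_B\otimes\sigma:B\otimes A\to B(\mc H_\varphi\otimes\mc H_h)$ is only contractive in general; it is isometric precisely when $\sigma=\pi_h$ is faithful, i.e.\ when the Haar state is faithful on the full $C^*$-algebra $A$. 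For the universal $C^*$-algebra of a non-coamenable compact quantum group this fails---in particular for $A_s(n)$ with $n\ge 5$, which is the paper's main example. Thus your conjugation identity yields only the lower bound $\|(\pi_\varphi\otimes\mathrm{id})\alpha(b)\|_{B\otimes A}\ge\|\pi_\varphi(b)\|$, and norm-continuity of the extension does not follow. (Algebraic well-definedness is fine, since $h$ is faithful on $\mc A$; it is the norm estimate that is missing.)

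The paper's remedy is to replace the Hilbert space $\mc H_h$ by the Hilbert $A$-module $A$: one forms $E=\mc H_\varphi\otimes A$ with $A$-valued inner product $\langle\eta_1\otimes a_1,\eta_2\otimes a_2\rangle=\langle\eta_1,\eta_2\rangle\,a_1^*a_2$ and uses that the canonical representation $B\otimes A\to\mc L_A(E)$ is isometric (\cite{lance}, \S4). Your density computation and your norm identity transfer verbatim to this setting (replace $\sigma(a)\xi_h$ by $a\in A$), and then the required upper bound is immediate. Alternatively, your argument as written is correct when $A$ is taken to be the \emph{reduced} compact-quantum-group $C^*$-algebra, and your unitary $U$ already directly produces the von Neumann--algebra coaction of $\pi_h(A)''$ on $\pi_\varphi(\mc B)''$, which is the form (Theorem~\ref{lift}) actually used in the remainder of the paper.
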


\begin{proof}
Define an $A$-valued inner product on the algebraic tensor product $\mc H_\varphi \odot A$ by
\begin{equation*}
 \iprod{\eta_1 \otimes a_1}{\eta_2 \otimes a_2} = \iprod{\eta_1}{\eta_2}a_1^* a_2.
\end{equation*}
The completion $E = \mc H_\varphi \otimes A$ is then a Hilbert $A$-module with the natural right action of $A$.  It is shown in (\cite{lance}, \S 4) that the $*$-homomorphism $\pi:B \otimes A \to \mc L_A(E)$ determined by
\begin{equation*}
 \pi(b \otimes a) (\eta \otimes a') = (b(\eta) \otimes a a')
\end{equation*}
is isometric.

Let $i_\varphi:\mc B \to \mc H$ be the map $i_\varphi(b) = \pi_\varphi(b)\xi_\varphi$.  We claim that $(i_\varphi \otimes \mathrm{id})\alpha(\mc B)\mc A$ is linearly dense in $E$.  Indeed, let $b \in \mc B$, then
\begin{equation*}
 \alpha(b) = \sum_{i =1}^n b_i \otimes a_i
\end{equation*}
for some $b_1,\dotsc,b_n \in \mc B$, $a_1,\dotsc,a_n \in \mc A$.  Let $\epsilon$ denote the counit of $\mc A$, then $(\mathrm{id} \otimes \epsilon)\alpha(b) = b$ since $\alpha$ is a coaction.  Let $S$ denote the antipode of $\mc A$, and $1:\C \to A$ the unit.  Then
\begin{align*}
 b \otimes 1_{A} &= (\mathrm{id} \otimes 1 \circ \epsilon)\alpha(b)\\
&= (\mathrm{id} \otimes \mu \circ (\mathrm{id} \otimes S))(\mathrm{id} \otimes \Delta)\alpha(b)\\
&= (\mathrm{id} \otimes \mu \circ (\mathrm{id} \otimes S))(\alpha \otimes \mathrm{id})\alpha(b)\\
&= \sum_{i = 1}^n \alpha(b_i)S(a_i).
\end{align*}
It follows that the linear span of $\alpha(\mc B)\mc A$ is $\mc B \otimes \mc A$ which proves the claim.

Now since $\pi:B \otimes A \to \mc L_A(E)$ is isometric, to prove that $\alpha$ extends to $B$ it suffices to show that $\norm{\pi( (\pi_\varphi \otimes \mathrm{id})\alpha(b))} \leq \norm{\pi_\varphi(b)}$ for $b \in \mc B$.  Since $(i_\varphi \otimes \mathrm{id})\alpha(\mc B)\mc A$ is linearly dense in $E$, it suffices to show that
\begin{equation*}
 \biggl\|\pi((\pi_\varphi \otimes \mathrm{id})\alpha(b)) \biggl(\sum_{i=1}^n (i_\varphi \otimes \mathrm{id})\alpha(b_i) a_i\biggr)\biggr\| \leq \norm{\pi_\varphi(b)} \biggl\| \sum_{i=1}^n (i_\varphi \otimes \mathrm{id})\alpha(b_i) a_i\biggr\|
\end{equation*}
for any choice of $b, b_1,\dotsc,b_n \in \mc B$, $a_1,\dotsc,a_n \in \mc A$.  First note that 
\begin{align*}
 \biggl\|\sum_{i=1}^n i_\varphi(b_i) \otimes a_i\biggr\|^2 &= \biggl\|\sum_{1 \leq i,j \leq n} \varphi(b_i^*b_j) a_i^* a_j\biggr\|\\
&= \biggl\|\sum_{1 \leq i,j \leq n} (\varphi \otimes \mathrm{id})\bigl[(1 \otimes a_i^*)\alpha(b_i^* b_j)  (1 \otimes a_j)\bigr]\biggr\|\\
&= \biggl\| \sum_{i = 1}^n (i_\varphi \otimes \mathrm{id})\alpha(b_i)a_i \biggr\|^2,
\end{align*}
where we have used the invariance of the state $\varphi$.  We then have
\begin{align*}
 \biggl\|\pi((\pi_\varphi \otimes \mathrm{id})\alpha(b)) \biggl(\sum_{i=1}^n (i_\varphi \otimes \mathrm{id})\alpha(b_i) a_i \biggr)\biggr\|^2 &= \biggl\|\sum_{i=1}^n (i_\varphi \otimes \mathrm{id})\alpha(bb_i)a_i\biggr\|^2   \\
&= \biggl\| \sum_{i=1}^n i_\varphi(bb_i) \otimes a_i\biggr\|^2\\
&\leq \norm{\pi_\varphi(b)}^2 \biggl\|\sum_{i=1}^n i_\varphi(b_i) \otimes a_i\biggr\|^2\\
&= \norm{\pi_\varphi(b)}^2 \biggl\|\sum_{i=1}^n (i_\varphi \otimes \mathrm{id})\alpha(b_i) a_i\biggr\|^2.
\end{align*}
So $\alpha$ extends to a $*$-homomorphism $\widetilde \alpha:B \to B \otimes A$, which by continuity is a right coaction.  The relation between the fixed point algebras is given in \cite{ergodic}*{Proposition 2.3}.
\end{proof}

\begin{rmk}
Combining this result with \cite{ergodic}*{Theorem 2.5}, we obtain the following theorem.
\end{rmk}

\begin{thm}\label{lift}
Let $(A,\Delta)$ be a compact quantum group with Haar state $h$ and canonical dense $*$-Hopf algebra $\mc A$.  Let $\mc B$ be a $*$-algebra and $\alpha:\mc B \to \mc B \otimes \mc A$ a right coaction.  Let $\varphi$ be a state on $\mc B$ such that $\mc B$ acts by bounded operators on the GNS Hilbert space $(\mc H_\varphi,\xi_\varphi)$.  Let $\pi_\varphi$ denote the GNS representation of $\mc B$ on $\mc H_\varphi$.  Then $\alpha$ lifts to a coaction $\widetilde \alpha$ of the Hopf von Neumann algebra $\frk A = \pi_h(A)''$ on the von Neumann algebra $\frk B = \pi_\varphi(\mc B)''$ determined by
\begin{equation*}
\widetilde \alpha(\pi_\varphi(b)) = (\pi_\varphi \otimes \pi_h)\alpha(b)
\end{equation*}
for $b \in \mc B$, where $\pi_h$ and $\pi_\varphi$ are the GNS representations of $h$ and $\varphi$, respectively.  Moreover, the fixed point algebra $\frk B^{\widetilde \alpha}$ is the weak closure of $\pi_\varphi(\mc B^\alpha)$.
\qed
\end{thm}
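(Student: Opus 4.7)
The plan is to bootstrap the preceding Proposition up to the von Neumann algebra setting via \cite{ergodic}*{Theorem 2.5}. As in the preceding Proposition, the statement implicitly requires $\varphi$ to be invariant under $\alpha$; I would carry this hypothesis through the argument. First I would apply the Proposition directly to obtain a right coaction $\widetilde \alpha_0 : B \to B \otimes A$ of the compact quantum group on the C$^*$-algebra $B = C^*(\pi_\varphi(\mc B))$, determined on the dense $*$-subalgebra $\pi_\varphi(\mc B)$ by $\widetilde \alpha_0(\pi_\varphi(b)) = (\pi_\varphi \otimes \mathrm{id})\alpha(b)$.

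Next I would check that the vector state $\bar\varphi(x) = \iprod{x\xi_\varphi}{\xi_\varphi}$, which extends $\varphi$ to a state on $B$, is invariant under $\widetilde \alpha_0$. On generators of the form $\pi_\varphi(b)$ with $b \in \mc B$, this is an immediate computation using the invariance of $\varphi$ under $\alpha$, and the identity propagates to all of $B$ by norm continuity. Having produced data consisting of a C$^*$-algebra coaction of $(A,\Delta)$ together with an invariant state, I would then invoke \cite{ergodic}*{Theorem 2.5}, which simultaneously passes to the GNS representations $\pi_{\bar\varphi}$ of $\bar\varphi$ and $\pi_h$ of $h$ and yields a normal coaction $\widetilde \alpha : \frk B \to \frk B \overline\otimes \frk A$ satisfying $\widetilde \alpha(x) = (\mathrm{id} \otimes \pi_h)\widetilde \alpha_0(x)$ for $x \in B$. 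Composing this with the explicit formula from the Proposition, and observing that $\pi_{\bar\varphi}(B)'' = \pi_\varphi(\mc B)'' = \frk B$, gives the asserted formula $\widetilde \alpha(\pi_\varphi(b)) = (\pi_\varphi \otimes \pi_h)\alpha(b)$ for $b \in \mc B$.

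The fixed point algebra identification is then obtained by chaining the two separate fixed-point statements: the Proposition says $B^{\widetilde \alpha_0}$ is the norm closure of $\pi_\varphi(\mc B^\alpha)$, while \cite{ergodic}*{Theorem 2.5} identifies $\frk B^{\widetilde \alpha}$ with the weak closure of $B^{\widetilde \alpha_0}$ inside $\frk B$; together these exhibit $\frk B^{\widetilde \alpha}$ as the weak closure of $\pi_\varphi(\mc B^\alpha)$, as claimed.

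The hard part of the argument is already packaged inside \cite{ergodic}*{Theorem 2.5}, whose key non-trivial ingredient is the construction, via averaging against the Haar state, of a normal conditional expectation onto the fixed-point subalgebra that is compatible with both GNS constructions. Everything else reduces to routine verification on the dense $*$-algebra $\pi_\varphi(\mc B)$ followed by norm or weak continuity, so I do not expect any further obstacle beyond correctly matching the two lifting results.
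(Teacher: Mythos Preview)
Your proposal is correct and matches the paper's approach exactly: the paper's proof consists of a single sentence, ``Combining this result with \cite{ergodic}*{Theorem 2.5}, we obtain the following theorem,'' and you have correctly fleshed out precisely that combination, including the observation that the invariance hypothesis on $\varphi$ is implicitly required. Your additional verification that $\bar\varphi$ is invariant under $\widetilde\alpha_0$ and your chaining of the two fixed-point identifications are the natural details needed to make the one-line proof rigorous.
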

\section{Finite quantum exchangeable sequences}

\begin{lem}
Let $C$ be a unital $*$-algebra.  Let $1 \leq j \leq n$ and define $\alpha_n^{(j)}: C \to C_n \otimes \mc A_s(n)$ by
\begin{equation*}
 \alpha_n^{(j)}(c) = \sum_{i = 1}^n c^{(i)} \otimes u_{ij}
\end{equation*}
for $c \in C$.  Then $\alpha_n^{(j)}$ is a unital $*$-homomorphism.
\end{lem}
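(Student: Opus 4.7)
The proof amounts to verifying the three requirements for being a unital $*$-homomorphism, and each one uses exactly one defining property of the magic unitary $(u_{ij})$. The strategy is to define $\alpha_n^{(j)}$ on $C$ by the stated formula (which is clearly linear) and then check unitality, $*$-preservation, and multiplicativity directly.

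For unitality, I would observe that in $C_n$ (the free product with amalgamation over $\C$) we have $1^{(i)} = 1$ for every $i$, so that
\begin{equation*}
 \alpha_n^{(j)}(1) = \sum_{i=1}^n 1 \otimes u_{ij} = 1 \otimes \sum_{i=1}^n u_{ij} = 1 \otimes 1,
\end{equation*}
where the last equality uses the column-sum condition (iii) of the magic unitary. For $*$-preservation, the maps $\gamma_i:C \to C_n$ are $*$-homomorphisms and each $u_{ij}$ is a projection, hence self-adjoint, so
\begin{equation*}
 \alpha_n^{(j)}(c^*) = \sum_{i=1}^n (c^{(i)})^* \otimes u_{ij}^* = \Bigl(\sum_{i=1}^n c^{(i)} \otimes u_{ij}\Bigr)^* = \alpha_n^{(j)}(c)^*.
\end{equation*}

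The main (and only slightly nontrivial) step is multiplicativity. Here I would compute
\begin{equation*}
 \alpha_n^{(j)}(c)\alpha_n^{(j)}(c') = \sum_{i,k=1}^n c^{(i)}c'^{(k)} \otimes u_{ij}u_{kj},
\end{equation*}
and invoke condition (ii) of the magic unitary: $u_{ij}u_{kj}=0$ whenever $i \neq k$, while $u_{ij}u_{ij}=u_{ij}$ because $u_{ij}$ is a projection. Thus only the diagonal terms $i=k$ survive, leaving
\begin{equation*}
 \alpha_n^{(j)}(c)\alpha_n^{(j)}(c') = \sum_{i=1}^n c^{(i)}c'^{(i)} \otimes u_{ij} = \sum_{i=1}^n (cc')^{(i)} \otimes u_{ij} = \alpha_n^{(j)}(cc'),
\end{equation*}
since each $\gamma_i$ is a homomorphism. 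There is no real obstacle; the point of the lemma is precisely that the three magic unitary axioms are exactly what is needed to make the formula define a unital $*$-homomorphism, and this will then allow one to extend via the universal property to get the desired coaction of $A_s(n)$.
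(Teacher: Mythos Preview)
Your proof is correct and follows essentially the same approach as the paper: the paper simply declares unitality and $*$-preservation to be ``clear by the defining relations of the projections $u_{ij}$'' and then verifies multiplicativity exactly as you do, using $u_{i_1 j}u_{i_2 j} = \delta_{i_1 i_2} u_{i_1 j}$. Your version just spells out the unitality and $*$-preservation steps explicitly.
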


\begin{proof}
It is clear by the defining relations of the projections $u_{ij}$ that $\alpha_n^{(j)}$ is unital and $*$-preserving.  Let $c_1,c_2 \in C$ then
\begin{align*}
 \alpha_{n}^{(j)}(c_1)\alpha_{n}^{(j)}(c_2) &= \sum_{1 \leq i_1,i_2 \leq n} c_1^{(i_1)}c_2^{(i_2)} \otimes u_{i_1 j}u_{i_2 j}\\
&= \sum_{i = 1}^n c_1^{(i)}c_2^{(i)}u_{ij}\\
&= \alpha_n^{(j)}(c_1c_2),
\end{align*}
where we have used the fact that $u_{i_1 j }u_{i_2 j} = \delta_{i_1 i_2} u_{i_1 j}$.
\end{proof}

\begin{rmk}
Let $C$ be a unital $*$-algebra, we define $\alpha_n:C_n \to C_n \otimes \mc A_s(n)$ to be the free product of $\alpha_n^{(j)}$, i.e. $\alpha_n$ is the unique unital $*$-homomorphism determined by
\begin{equation*}
 \alpha_n(c^{(j)}) = \sum_{i=1}^n c^{(i)} \otimes u_{ij}
\end{equation*}
for $c \in C$ and $1 \leq j \leq n$.  Let $c_1,\dotsc,c_k \in C$ and $j_1,\dotsc,j_k \in I$, then
\begin{align*}
 (\alpha_n \otimes \mathrm{id})\alpha_n(c_1^{(j_1)}\dotsb c_k^{(j_k)}) &= \sum_{1 \leq i_1,\dotsc, i_k \leq n} \alpha_n(c_1^{(i_1)}\dotsb c_k^{(i_k)}) \otimes u_{i_1j_1}\dotsb u_{i_kj_k}\\
&= \sum_{1 \leq i_1,\dotsc,i_k \leq n} \biggl(\sum_{1 \leq l_1,\dotsc,l_k \leq n} c_1^{(l_1)}\dotsb c_k^{(l_1)} \otimes u_{l_1i_1}\dotsb u_{l_kj_k}\biggr) \otimes u_{i_1j_1}\dotsb u_{i_kj_k}\\
&= \sum_{1 \leq l_1,\dotsc,l_k \leq n} c_1^{(l_1)}\dotsb c_k^{(l_k)} \otimes \biggl(\sum_{1 \leq i_1,\dotsc,i_k \leq n} u_{l_1i_1}\dotsb u_{l_ki_k}\otimes u_{i_1j_1}\dotsb u_{i_kj_k}\biggr)\\
&= \sum_{1 \leq l_1,\dotsc,l_k \leq n} c_1^{(l_1)} \dotsb c_k^{(l_k)}\otimes \Delta(u_{l_1j_1}\dotsb u_{l_kj_k})\\
&= (\mathrm{id} \otimes \Delta) \alpha_n(c_1^{(j_1)}\dotsb c_k^{(j_k)}).
\end{align*}
It is also easy to see that
\begin{align*}
 (\mathrm{id} \otimes \epsilon)\alpha_n(c_1^{(j_1)}\dotsb c_k^{(j_k)}) = c_1^{(j_1)}\dotsb c_k^{(j_k)},
\end{align*}
so $\alpha_n$ is a right coaction of the Hopf $*$-algebra $\mc A_s(n)$ on $C_n$.

\end{rmk}

\begin{defn}
Let $C$ be a unital $*$-algebra, $(A,\varphi)$ a noncommutative probability space, and $\rho:C_n \to A$ a unital $*$-homomorphism.  We say that the distribution $\varphi_\rho$ is \textit{invariant under quantum permutations}, or that the sequence $(\rho_1,\dotsc,\rho_n)$ is \textit{quantum exchangeable}, if $\varphi_\rho$ is invariant under the coaction $\alpha_n$, i.e.
\begin{equation*}
 (\varphi_\rho \otimes \mathrm{id})\alpha_n(p) = \varphi_\rho(p)1_{A_s(n)}
\end{equation*}
for any $p \in C_n$.  
\end{defn}

\begin{rmk}\textit{Remarks}.\label{qexcdef}
\begin{enumerate}
\item More explicitly, this amounts to the condition that
\begin{equation*}
 \sum_{1 \leq i_1,\dotsc,i_k \leq n} \varphi(\rho_{i_1}(c_1)\dotsb \rho_{i_k}(c_k))u_{i_1j_1}\dotsb u_{i_kj_k} = \varphi(\rho_{j_1}(c_1)\dotsb \rho_{j_k}(c_k))\cdot 1
\end{equation*}
for any $c_1,\dotsc,c_k \in C$ and $1 \leq j_1,\dotsc,j_k \leq n$.
\item Let $C = \C \langle t,t^* \rangle$, and $x_j \in A$ such that $\rho_j(t) = x_j$.  Then $(\rho_1,\dotsc,\rho_n)$ is quantum exchangeable if and only if $(x_1,\dotsc,x_n)$ is quantum exchangeable as defined in \cite{spekos}.
\item By the universal property of $A_s(n)$, the sequence $(\rho_1,\dotsc,\rho_n)$ is quantum exchangeable if and only if the equation in (i) holds for any family $\{u_{ij}:1 \leq i,j \leq n\}$ of projections in a unital C$^*$-algebra $B$ such that $(u_{ij}) \in M_n(B)$ is a magic unitary matrix.
\item For $1 \leq i,j \leq n$, define $f_{ij} \in C(S_n)$ by $f_{ij}(\pi) = \delta_{i\pi(j)}$.  The matrix $(f_{ij})$ is a magic unitary, and the equation in (i) becomes
\begin{align*}
 \varphi(\rho_{j_1}(c_1)\dotsb \rho_{j_k}(c_k))1_{C(S_n)} &= \sum_{1 \leq i_1,\dotsc,i_k \leq n} \varphi(\rho_{i_1}(c_1)\dotsb \rho_{i_n}(c_n))f_{i_1j_1}\dotsb f_{i_kj_k} \\
&= \sum_{\pi \in S_n} \varphi(\rho_{\pi(j_1)}(c_1)\dotsb \rho_{\pi(j_k)}(c_k)) \delta_\pi,
\end{align*}
where $\delta_\pi \in S_n$ is the indicator function of $\{\pi\}$.  So if $\varphi_\rho$ is invariant under quantum permutations, then 
\begin{equation*}
\varphi(\rho_{j_1}(c_1)\dotsb \rho_{j_k}(c_k)) = \varphi(\rho_{\pi(j_1)}(c_1)\dotsb \rho_{\pi(j_k)}(c_k))
\end{equation*}
for any $\pi \in S_n$, so $\varphi_\rho$ is invariant under usual permutations and in particular, the sequence $(\rho_1,\dotsc,\rho_n)$ is identically distributed with respect to $\varphi$.
\end{enumerate}

\end{rmk}

\begin{rmk}
First we show that sequences which are freely independent and identically distributed with respect to a conditional expectation are quantum exchangeable.  This holds in a purely algebraic context.  The proof is a simple adaptation of the argument in \cite{spekos}*{Proposition 3.1}, but is included for the convenience of the reader.  
\end{rmk}

\begin{prop}\label{freeexch}
Let $C$ be a unital $*$-algebra, $(A,\varphi)$ a noncommutative probability space, and let $\rho:C_n \to A$ be a unital $*$-homomorphism.  Let $B \subset A$, and suppose that there is a $\varphi$-preserving conditional expectation $E:A \to B$ such that $(\rho_1,\dotsc,\rho_n)$ is freely independent and identically distributed with respect to $E$.  Then $(\rho_1,\dotsc,\rho_n)$ is quantum exchangeable.
\end{prop}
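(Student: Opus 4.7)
The plan is to verify quantum exchangeability directly from the explicit formulation in Remark \ref{qexcdef}(i), using the combinatorial characterization of freeness in Corollary \ref{vancum}. Since $\varphi = (\varphi|_B) \circ E$, it suffices to prove the stronger $B$-valued identity
\begin{equation*}
\sum_{1 \leq i_1,\dotsc,i_k \leq n} E[\rho_{i_1}(c_1) \dotsb \rho_{i_k}(c_k)] \otimes u_{i_1 j_1} \dotsb u_{i_k j_k} = E[\rho_{j_1}(c_1)\dotsb \rho_{j_k}(c_k)] \otimes 1
\end{equation*}
in $B \otimes A_s(n)$, and then apply $\varphi|_B \otimes \mathrm{id}$.

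To attack this, I would expand the left-hand moment via Corollary \ref{vancum} as $\sum_{\pi \leq \ker \mathbf i} \kappa_E^{(\pi)}[\rho_{i_1}(c_1) \otimes \dotsb \otimes \rho_{i_k}(c_k)]$. Because the sequence is identically distributed with respect to $E$, each block of $\pi \leq \ker \mathbf i$ involves a single $\rho_{i_V}$ and, by the nested definition of $\kappa_E^{(\pi)}$, the result depends only on $\pi$ and $c_1,\dotsc,c_k$; call this common value $\kappa_\pi$. Exchanging summation order, the left-hand side becomes
\begin{equation*}
\sum_{\pi \in NC(k)} \kappa_\pi \otimes \Biggl(\sum_{\mathbf i \,:\, \pi \leq \ker \mathbf i} u_{i_1 j_1} \dotsb u_{i_k j_k}\Biggr).
\end{equation*}

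The heart of the argument is the combinatorial identity
\begin{equation*}
\sum_{\mathbf i \,:\, \pi \leq \ker \mathbf i} u_{i_1 j_1} \dotsb u_{i_k j_k} = \delta_{\pi \leq \ker \mathbf j} \cdot 1_{A_s(n)},
\end{equation*}
which I expect to be the main technical step. I would prove it by induction on the number of blocks of $\pi$ using the recursive structure of non-crossing partitions: pick an interval block $V = \{l+1, \dotsc, l+m\}$ and set $\pi' = \pi \setminus V$. Summing over $i_V$ first, the inner factor $\sum_{i_V} u_{i_V j_{l+1}} \dotsb u_{i_V j_{l+m}}$ collapses by the magic unitary relations $u_{ij}u_{ij'} = \delta_{jj'}u_{ij}$ and $\sum_i u_{ij} = 1$: it equals $1_{A_s(n)}$ if $j_{l+1} = \dotsb = j_{l+m}$ (i.e., $V \leq \ker \mathbf j$) and $0$ otherwise. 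The residual sum has the same form for $\pi'$ with the reduced tuple $\mathbf j' = (j_1,\dotsc,j_l,j_{l+m+1},\dotsc,j_k)$, and the induction hypothesis closes the argument since $\pi \leq \ker \mathbf j$ is equivalent to $V \leq \ker \mathbf j$ together with $\pi' \leq \ker \mathbf j'$.

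Once this is established, the left-hand side collapses to $\sum_{\pi \leq \ker \mathbf j} \kappa_\pi \otimes 1$, which by Corollary \ref{vancum} applied to the tuple $\mathbf j$ equals $E[\rho_{j_1}(c_1) \dotsb \rho_{j_k}(c_k)] \otimes 1$. Applying $\varphi|_B \otimes \mathrm{id}$ yields the quantum exchangeability condition. Everything outside the combinatorial identity is a mechanical application of freeness with amalgamation and identical distribution, so the interval-block induction is the only nontrivial ingredient.
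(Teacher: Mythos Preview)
Your proposal is correct and follows essentially the same route as the paper: expand the moment via the moment--cumulant formula (equivalently Corollary~\ref{vancum}), use identical distribution to make $\kappa_E^{(\pi)}$ independent of $\mathbf i$, and then establish the key identity $\sum_{\mathbf i:\pi\leq\ker\mathbf i} u_{i_1j_1}\dotsb u_{i_kj_k}=\delta_{\pi\leq\ker\mathbf j}\cdot 1$ by the same interval-block induction. The only cosmetic difference is that you phrase the computation at the $B$-valued level before applying $\varphi|_B$, whereas the paper applies $\varphi$ from the outset.
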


\begin{proof}
Let $c_1,\dotsc,c_k \in C$ and $1 \leq j_1,\dotsc,j_k \leq n$.  We have
\begin{align*}
& \sum_{1 \leq i_1,\dotsc,i_k \leq n} \varphi(\rho_{i_1}(c_1)\dotsb \rho_{i_k}(c_k))u_{i_1j_1}\dotsb u_{i_k j_k} \\
&= \sum_{1 \leq i_1,\dotsc,i_k \leq n} \varphi(E[\rho_{i_1}(c_1)\dotsb \rho_{i_k}(c_k)])u_{i_1j_1}\dotsb u_{i_kj_k}\\
&= \sum_{1 \leq i_1,\dotsc,i_k \leq n} \sum_{\pi \in NC(k)} \varphi(\kappa_E^{(\pi)}[\rho_{i_1}(c_1) \otimes \dotsb \otimes \rho_{i_k}(c_k)]) u_{i_1j_1}\dotsb u_{i_k j_k}.
\end{align*}
Now since $\rho_1,\dotsc,\rho_n$ are freely independent with respect to $E$, $\kappa_{E}^{(\pi)}[\rho_{i_1}(c_1) \otimes \dotsb \otimes \rho_{i_k}(c_k)]$ is zero unless $\pi \leq \ker \mathbf i$.  Moreover, since $\rho_1,\dotsc,\rho_n$ are identically distributed, the value of $\kappa_{E}^{(\pi)}[\rho_{i_1}(c_1) \otimes \dotsb \otimes \rho_{i_k}(c_k)]$ is the same for any $1 \leq i_1,\dotsc,i_k \leq n$ such that $\pi \leq \ker \mathbf i$.  We denote this value by $\kappa_E^{(\pi)}$.  We then have
\begin{align*}
 \sum_{1 \leq i_1,\dotsc,i_k \leq n} \varphi(\rho_{i_1}(c_1)\dotsb \rho_{i_k}(c_k))u_{i_1j_1}\dotsb u_{i_k j_k} &= \sum_{1 \leq i_1,\dotsc,i_k \leq n} \sum_{\substack{\pi \in NC(k)\\ \pi \leq \ker \mathbf i}} \varphi(\kappa_E^{(\pi)})u_{i_1j_1}\dotsb u_{i_kj_k}\\
&=\sum_{ \pi \in NC(k)} \varphi(\kappa_E^{(\pi)})\sum_{\substack{1 \leq i_1,\dotsc,i_k \leq n\\ \pi \leq \ker \mathbf i}} u_{i_1j_1}\dotsb u_{i_k j_k}.
\end{align*}
Next we claim that if $\pi \in NC(k)$, then
\begin{equation*}
\sum_{\substack{1 \leq i_1,\dotsc,i_k \leq n\\ \pi \leq \ker \mathbf i}} u_{i_1 j_1}\dotsb u_{i_k j_k} = \begin{cases} 1_{A_s(n)}, & \pi \leq \ker \mathbf j\\ 0, & \text{otherwise}\end{cases}.
\end{equation*}
We prove this by induction on the number of blocks of $\pi$.  If $\pi = 1_k$ has only one block, we have
\begin{align*}
 \sum_{\substack{1 \leq i_1,\dotsc,i_k \leq n\\ \pi \leq \ker \mathbf i}} u_{i_1 j_1}\dotsb u_{i_k j_k} &= \sum_{i = 1}^n u_{ij_1}\dotsb u_{ij_k}\\
&= \begin{cases}
    1_{A_s(n)}, & j_1 = \dotsb = j_k\\
 0, & \text{otherwise}
   \end{cases}.
\end{align*}
Otherwise let $V = \{l+1,\dotsc,l+s\}$ be an interval of $\pi$.  Then
\begin{align*}
\sum_{\substack{1 \leq i_1,\dotsc,i_k \leq n\\ \pi \leq \ker \mathbf i}} u_{i_1 j_1}\dotsb u_{i_k j_k} &= \sum_{\substack{1 \leq i_1,\dotsc,i_l,i_{l+s+1},\dotsc,i_k \leq n\\ \pi \leq \ker \mathbf i}} u_{i_1 j_1}\dotsb \biggl(\sum_{i=1}^n u_{ij_{l+1}}\dotsb u_{ij_{l+s}}\biggr)\dotsb u_{i_k j_k}
\end{align*}
which as seen above is equal to 0 unless $V$ is a block of $\ker \mathbf j$, in which case this is equal to
\begin{equation*}
 \sum_{\substack{1 \leq i_1,\dotsc,i_l,i_{l+s+1},\dotsc,i_k \leq n\\ (\pi \setminus V) \leq \ker \mathbf i}} u_{i_1j_1}\dotsb u_{i_{l}j_{l}}u_{i_{l+s+1}j_{l+s+1}}\dotsb u_{i_kj_k}.
\end{equation*}
By induction, this is equal to zero unless $(\pi \setminus V) \leq (\ker \mathbf j \setminus V)$, in which case it is equal to $1_{A_s(n)}$.  The claim follows by induction.

It then follows that
\begin{align*}
 \sum_{1 \leq i_1,\dotsc,i_k \leq n} \varphi(\rho_{i_1}(c_1)\dotsb \rho_{i_k}(c_k))u_{i_1j_1}\dotsb u_{i_k j_k} &= \sum_{\substack{\pi \in NC(k)\\ \pi \leq \ker \mathbf j}} \varphi(\kappa_E^{(\pi)})1_{A_s(n)}\\
&= \varphi(\rho_{j_1}(c_1)\dotsb \rho_{j_k}(c_k))1_{A_s(n)}.
\end{align*}
By the remark in (\ref{qexcdef}), the sequence $(\rho_1,\dotsc,\rho_n)$ is quantum exchangeable.

\end{proof}

\begin{rmk}
Throughout the rest of the section, $C$ will be a unital $*$-algebra, $(M,\varphi)$ will be a W$^*$-probability space, and $\rho:C_n \to M$ a unital $*$-homomorphism.  By $M_n$ we will denote the von Neumann algebra generated by $\rho(C_n)$, and we set $\varphi_n = \varphi|_{M_n}$.  We define 
\begin{equation*}
 \eu{QE}_n = \mathrm W^*\bigl(\{\rho(c): c \in C_n^{\alpha_n}\}\bigr),
\end{equation*}
where $C_n^{\alpha_n}$ is the fixed point algebra of the coaction $\alpha_n$.
\end{rmk}

\begin{prop}\label{fincoact}
Suppose that $(\rho_1,\dotsc,\rho_n)$ is quantum exchangeable.  Then there is a right coaction $\widetilde \alpha_n:M_n \to M_n \otimes \frk A_s(n)$ of the Hopf von Neumann algebra $\frk A_s(n)$ on $M_n$ determined by
\begin{equation*}
\widetilde \alpha_n(\rho(c)) = (\rho \otimes \pi_{\psi_n}) \alpha_n(c)
\end{equation*}
for $c \in C_n$.  Moreover,
\begin{equation*}
 M_n^{\widetilde \alpha_n} = \eu{QE}_n.
\end{equation*}

\end{prop}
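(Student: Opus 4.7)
The plan is to obtain this proposition as a direct application of Theorem~\ref{lift}, taking $\mc B = C_n$ with coaction $\alpha = \alpha_n$ and state $\varphi_\rho = \varphi \circ \rho$ on $C_n$. Three things need to be verified: (i) invariance of $\varphi_\rho$ under $\alpha_n$; (ii) that $C_n$ acts by bounded operators in the GNS representation of $\varphi_\rho$; and (iii) that $\pi_{\varphi_\rho}(C_n)''$ is naturally isomorphic to $M_n$, so the lifted coaction is in the required form.

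Item (i) is immediate from the definition of quantum exchangeability, which by Remark~\ref{qexcdef}(i) is exactly the statement $(\varphi_\rho \otimes \mathrm{id})\alpha_n(p) = \varphi_\rho(p)\cdot 1$ for every $p \in C_n$. For (ii) and (iii), I would use the fact that $(M,\varphi)$ is a W$^*$-probability space with $\varphi$ faithful normal, so the vector $\xi_\varphi$ is cyclic and separating. The map $c \mapsto \rho(c)\xi_\varphi$ from $C_n$ into $L^2(M,\varphi)$ satisfies $\norm{\rho(c)\xi_\varphi}^2 = \varphi(\rho(c)^*\rho(c)) = \varphi_\rho(c^*c)$, and hence factors through a Hilbert space isometry from $\mc H_{\varphi_\rho}$ onto the closed subspace $\overline{M_n \xi_\varphi} \subset L^2(M,\varphi)$. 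Under this isometry, $\pi_{\varphi_\rho}(c)$ corresponds to left multiplication by $\rho(c)$, which is bounded of norm at most $\norm{\rho(c)}$; it follows that $\pi_{\varphi_\rho}(C_n)''$ is unitarily equivalent to $M_n$.

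With these three items in hand, Theorem~\ref{lift} produces a right coaction $\widetilde \alpha_n: M_n \to M_n \overline \otimes \frk A_s(n)$ characterized on generators by $\widetilde \alpha_n(\rho(c)) = (\rho \otimes \pi_{\psi_n})\alpha_n(c)$, and identifies $M_n^{\widetilde \alpha_n}$ with the weak closure of $\pi_{\varphi_\rho}(C_n^{\alpha_n})$. Under the identification of (iii), this weak closure is exactly the weak closure of $\rho(C_n^{\alpha_n})$, which is $\eu{QE}_n$ by definition.

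I do not anticipate any genuine obstacle: the proposition is essentially an unpacking of Theorem~\ref{lift} for the specific coaction $\alpha_n$ on $C_n$. The only substantive ingredient beyond that theorem is the concrete GNS identification in (ii)–(iii), which is entirely routine given that $\varphi$ is a faithful normal state on $M$.
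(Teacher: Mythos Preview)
Your proposal is correct and follows exactly the paper's approach: the paper's proof is the single line ``Since the GNS-representation for $\varphi_\rho$ is naturally identified with $\rho$, the result follows from Theorem~\ref{lift},'' and your items (ii)--(iii) just spell out that identification. There is nothing to add.
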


\begin{proof}
Since the GNS-representation for $\varphi_\rho$ is naturally identified with $\rho$, the result follows from Theorem \ref{lift}.
\end{proof}

\begin{rmk}
The basic example of a finite exchangeable sequence which is not conditionally independent is the \textit{urn sequence} $(\xi_1,\dotsc,\xi_n)$ obtained by sampling without replacement from the set $\{1,\dotsc,n\}$.  The distribution of this sequence is given by
\begin{equation*}
 E[\xi_1^{k_1}\dotsb \xi_n^{k_n}] = \frac{1}{n!} \sum_{\pi \in S_n} \xi_{\pi(1)}^{k_1}\dotsb \xi_{\pi(n)}^{k_n},
\end{equation*}
which is the integral of the permutation action of $S_n$ with respect to the Haar measure.  Any finite exchangeable sequence is conditionally of this form (\cite{kallenberg}*{Proposition 1.8}).  

A noncommutative analogue of this sequence was given by K\"{o}stler and Speicher in \cite{spekos}, where the integral over $S_n$ is replaced by an integral over $A_s(n)$.  The following proposition shows that, as in the classical case, all finite quantum exchangeable sequences are conditionally of this form.
\end{rmk}

\begin{prop}\label{finrep}
Suppose that $(\rho_1,\dotsc,\rho_n)$ is quantum exchangeable.  Then there is a $\varphi_n$-preserving conditional expectation $E_{\eu{QE}_n}:M_n \to \eu{QE}_n$, determined by
\begin{equation*}
 E_{\eu{QE}_n}= (\mathrm{id} \otimes \psi_n) \circ \widetilde \alpha_n.
\end{equation*}
More explicitly, for $\beta_1,\dotsc,\beta_k \in C * \eu{QE}_n$ and $1 \leq j_1,\dotsc,j_k \leq n$, we have
\begin{equation*}
 E_{\eu{QE}_n}[\widetilde \rho_{j_1}(\beta_1)\dotsb \widetilde \rho_{j_k}(\beta_k)] = \sum_{1 \leq i_1,\dotsc,i_k \leq n} \widetilde\rho_{i_1}(\beta_1)\dotsb \widetilde \rho_{i_k}(\beta_k) \psi_n(u_{i_1j_1}\dotsb u_{i_kj_k}).
\end{equation*}

\end{prop}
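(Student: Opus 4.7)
The plan is to verify each required property of $E_{\eu{QE}_n} := (\mathrm{id} \otimes \psi_n) \circ \widetilde\alpha_n$ using the standard toolkit for Haar-state integration of compact quantum group coactions, and then to derive the explicit formula by direct computation.

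First I would check well-definedness and the basic positivity and unitality. The coaction $\widetilde\alpha_n:M_n \to M_n \otimes \frk A_s(n)$ exists by Proposition \ref{fincoact}, $\psi_n$ is a faithful normal state on $\frk A_s(n)$, and slicing gives a unital normal completely positive map $E_{\eu{QE}_n}:M_n \to M_n$.

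Next I would show that the image lies inside $\eu{QE}_n = M_n^{\widetilde\alpha_n}$. For $x \in M_n$, coassociativity $(\widetilde\alpha_n \otimes \mathrm{id})\widetilde\alpha_n = (\mathrm{id} \otimes \Delta)\widetilde\alpha_n$ combined with the right invariance of $\psi_n$ (i.e.\ $(\mathrm{id}\otimes \psi_n)\Delta(a) = \psi_n(a)\cdot 1$) yields
\begin{equation*}
\widetilde\alpha_n(E_{\eu{QE}_n}(x)) = (\mathrm{id} \otimes \mathrm{id} \otimes \psi_n)(\mathrm{id} \otimes \Delta)\widetilde\alpha_n(x) = E_{\eu{QE}_n}(x) \otimes 1,
\end{equation*}
so $E_{\eu{QE}_n}(x) \in \eu{QE}_n$ by Proposition \ref{fincoact}. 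The bimodule property over $\eu{QE}_n$ follows immediately from the fact that $\widetilde\alpha_n(b) = b \otimes 1$ for $b \in \eu{QE}_n$: for such $b,b'$ we have $\widetilde\alpha_n(bxb') = (b\otimes 1)\widetilde\alpha_n(x)(b'\otimes 1)$, and slicing by $\psi_n$ gives $bE_{\eu{QE}_n}(x)b'$. For $\varphi_n$-preservation, quantum exchangeability means $\varphi_\rho$ is invariant under $\alpha_n$, which, passing to the W$^*$-level via Theorem \ref{lift}, translates to $(\varphi_n \otimes \mathrm{id})\widetilde\alpha_n(x) = \varphi_n(x)\cdot 1$; applying $\psi_n$ on the right and using $\psi_n(1) = 1$ gives $\varphi_n \circ E_{\eu{QE}_n} = \varphi_n$.

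The remaining point is the explicit formula, which requires computing $\widetilde\alpha_n(\widetilde\rho_j(\beta))$ for $\beta \in C * \eu{QE}_n$. Since $\widetilde\rho_j$ is multiplicative and a general $\beta$ is a linear combination of alternating products of elements of $C^{(j)}$ and of $\eu{QE}_n$, this reduces to two cases: on $\rho(c^{(j)}) = \widetilde\rho_j(c)$ for $c \in C$ we have $\widetilde\alpha_n(\rho(c^{(j)})) = \sum_{i=1}^n \rho(c^{(i)}) \otimes \pi_{\psi_n}(u_{ij}) = \sum_i \widetilde\rho_i(c) \otimes u_{ij}$ by the definition of $\alpha_n$, and on $b \in \eu{QE}_n$ we have $\widetilde\alpha_n(\widetilde\rho_j(b)) = \widetilde\alpha_n(b) = b \otimes 1$. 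Multiplying these building blocks and using $u_{ij}u_{i'j} = \delta_{ii'} u_{ij}$ shows that cross-terms collapse, yielding
\begin{equation*}
\widetilde\alpha_n(\widetilde\rho_j(\beta)) = \sum_{i=1}^n \widetilde\rho_i(\beta) \otimes u_{ij}
\end{equation*}
for all $\beta \in C * \eu{QE}_n$. The multiplicativity of $\widetilde\alpha_n$ then gives
\begin{equation*}
\widetilde\alpha_n\bigl(\widetilde\rho_{j_1}(\beta_1)\dotsb \widetilde\rho_{j_k}(\beta_k)\bigr) = \sum_{1\leq i_1,\dotsc,i_k \leq n} \widetilde\rho_{i_1}(\beta_1)\dotsb \widetilde\rho_{i_k}(\beta_k) \otimes u_{i_1 j_1}\dotsb u_{i_k j_k},
\end{equation*}
and applying $(\mathrm{id} \otimes \psi_n)$ yields the claimed identity. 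The main subtlety, though entirely routine, is the book-keeping in this last computation to ensure the formula for $\widetilde\alpha_n \circ \widetilde\rho_j$ is valid on all of $C * \eu{QE}_n$; this is where the fixed-point property $\widetilde\alpha_n(b) = b\otimes 1$ for $b \in \eu{QE}_n$ is essential.
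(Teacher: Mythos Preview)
Your proof is correct and follows essentially the same approach as the paper: both arguments reduce the explicit formula to showing $\widetilde\alpha_n(\widetilde\rho_j(\beta)) = \sum_{i=1}^n \widetilde\rho_i(\beta) \otimes u_{ij}$ via the magic unitary relation $u_{ij}u_{i'j} = \delta_{ii'}u_{ij}$ together with the fixed-point property $\widetilde\alpha_n(b) = b \otimes 1$ for $b \in \eu{QE}_n$. You spell out the Haar-state argument for the conditional expectation properties more explicitly than the paper, which simply asserts that this ``follows easily from the invariance of the Haar state,'' but this is routine detail rather than a different method.
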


\begin{proof}
It follows easily from the invariance of the Haar state that
\begin{equation*}
 E_{\eu{QE}_n} = (\mathrm{id} \otimes \psi_n) \circ \widetilde \alpha_n
\end{equation*}
determines a $\varphi_n$-preserving conditional expectation from $M_n$ onto the fixed point algebra $\eu{QE}_n$ of the coaction $\widetilde \alpha_n$.  For the second statement, it suffices to show that if $\beta \in C * \eu{QE}_n$ and $1 \leq j \leq n$, then
\begin{equation*}
 \widetilde \alpha_n(\widetilde \rho_j(\beta)) = \sum_{i=1}^n \widetilde \rho_i(\beta) \otimes u_{ij}.
\end{equation*}
Let $b_0,\dotsc,b_k \in \eu{QE}_n$, $c_1,\dotsc,c_k \in C$ and $1 \leq j \leq n$.  Then
\begin{align*}
 \widetilde \alpha_n(\widetilde \rho_j(b_0c_1\dotsb c_k b_k)) &= \widetilde \alpha_n(b_0\rho_j(c_1)\dotsb \rho_j(c_k)b_k)\\
 &= \sum_{1 \leq i_1,\dotsc,i_k \leq n} b_0\rho_{i_1}(c_1)\dotsb \rho_{i_k}(c_k)b_k \otimes u_{i_1j}\dotsb u_{i_kj}\\
&= \sum_{1 \leq i \leq n} b_0\rho_i(c_1)\dotsb \rho_i(c_k)b_k \otimes u_{ij}\\
&= \sum_{1 \leq i \leq n} \widetilde \rho_i(b_0c_1\dotsb c_k b_k) \otimes u_{ij},
\end{align*}
where we have used the fact that $\widetilde \alpha_n(b_l) = b_l \otimes 1$ for $0 \leq l \leq k$.  The result now follows.
\end{proof}

\begin{rmk}
We will now prepare to prove an approximation to the free de Finetti theorem for finite quantum exchangeable sequences.  We will need the following estimate on the entries of $W_{kn}$ (this improves the estimate given in \cite{bc}*{Lemma 4.1}).
\end{rmk}

\begin{lem}\label{West}
Fix $k \in \N$, and $\pi,\sigma \in NC(k)$.  Then
\begin{enumerate}
\renewcommand{\labelenumi}{(\roman{enumi})}
 \item $W_{kn}(\pi,\sigma) = O(n^{|\pi \vee \sigma| - |\pi| - |\sigma|})$.
\item If $\pi \leq \sigma$, then
\begin{equation*}
 W_{kn}(\pi,\sigma) = \mu_k(\pi,\sigma)n^{-|\pi|} + O(n^{-|\pi| - 1}),
\end{equation*}
where $\mu_k$ is the M\"{o}bius function of $NC(k)$.
\end{enumerate}

\end{lem}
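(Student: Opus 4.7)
The plan is to factor $G_{kn} = DZ$, where $D$ is diagonal with $D(\pi,\pi) = n^{|\pi|}$ and $Z(\pi,\sigma) = n^{|\pi\vee\sigma|-|\pi|}$, so that $W_{kn}(\pi,\sigma) = n^{-|\sigma|} Z^{-1}(\pi,\sigma)$. Every entry of $Z$ is at most $1$, with $Z(\pi,\sigma) = 1$ exactly when $\sigma\leq\pi$ and $Z(\pi,\sigma) \leq n^{-1}$ otherwise; hence $Z = \zeta^T + Y$ where $\zeta^T(\pi,\sigma) = \mathbf{1}_{\sigma\leq\pi}$ is the transposed zeta matrix of $NC(k)$ (whose Möbius inverse is $(\zeta^T)^{-1}(\pi,\sigma) = \mu_k(\sigma,\pi)$), and $Y$ is $O(n^{-1})$ entry-wise. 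For $n$ large we then have the convergent Neumann expansion
\begin{equation*}
Z^{-1} = \sum_{j \geq 0}\bigl[-(\zeta^T)^{-1}Y\bigr]^j(\zeta^T)^{-1}.
\end{equation*}

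For (i), I would bound each Neumann term entry-wise. The $(\pi,\sigma)$-entry of the $j$-th summand is a sum, over sequences $\rho_1,\tau_1,\ldots,\rho_j,\tau_j$ satisfying $\rho_i \leq \tau_{i-1}$ (with $\tau_0 = \pi$), $\tau_i \not\leq \rho_i$, and $\sigma \leq \tau_j$, of bounded Möbius factors times $\prod_{i=1}^j Y(\rho_i,\tau_i) = n^{\sum_i(|\rho_i\vee\tau_i|-|\rho_i|)}$. A naive telescoping via $|\rho_i\vee\tau_i| \leq |\tau_i|$ and $|\rho_i| \geq |\tau_{i-1}|$ gives $\sum_i(|\rho_i\vee\tau_i|-|\rho_i|) \leq |\tau_j|-|\pi|$; to sharpen this, I would instead track the partial joins $\tau_i\vee\pi$ and apply the semimodular inequality $|\alpha|+|\beta| \leq |\alpha\vee\beta|+|\alpha\wedge\beta|$ in $\mc P(k)$, together with $\tau_j\vee\pi \geq \pi\vee\sigma$ (which follows from $\sigma\leq\tau_j$), to obtain an exponent bounded by $|\pi\vee\sigma|-|\pi|$. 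Summing the geometric series yields $Z^{-1}(\pi,\sigma) = O(n^{|\pi\vee\sigma|-|\pi|})$, which after multiplication by $n^{-|\sigma|}$ gives (i).

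For (ii), given (i), I would induct on $r := |\pi|-|\sigma| \geq 0$ using the identity $\sum_\tau W_{kn}(\pi,\tau) n^{|\tau\vee\sigma|} = \delta_{\pi\sigma}$. By (i), each summand is $O(n^{|\pi\vee\tau|+|\tau\vee\sigma|-|\pi|-|\tau|})$; since $|\pi\vee\tau| \leq |\pi|$ with equality iff $\tau \leq \pi$, and $|\tau\vee\sigma| \leq |\tau|$ with equality iff $\sigma \leq \tau$, this exponent is $\leq |\sigma|-|\pi|$ with equality precisely for $\tau \in [\pi,\sigma]$. The base case $\pi = \sigma$ follows from matching $n^0$-coefficients and Möbius inversion. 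For the inductive step $\pi < \sigma$, the inductive hypothesis supplies $W_{kn}(\pi,\tau) = \mu_k(\pi,\tau)n^{-|\pi|}+O(n^{-|\pi|-1})$ for $\tau \in [\pi,\sigma)$; matching the coefficient of $n^{|\sigma|-|\pi|}$ in the identity gives
\begin{equation*}
c(\pi,\sigma) + \sum_{\pi\leq\tau<\sigma}\mu_k(\pi,\tau) = 0,
\end{equation*}
where $c(\pi,\sigma)$ denotes the coefficient of $n^{-|\pi|}$ in $W_{kn}(\pi,\sigma)$; the defining relation $\sum_{\pi\leq\tau\leq\sigma}\mu_k(\pi,\tau)=0$ of $\mu_k$ then forces $c(\pi,\sigma) = \mu_k(\pi,\sigma)$.

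The main obstacle is the refined exponent in (i): naive telescoping along Neumann paths yields only $O(n^{|\sigma|-|\pi|})$, and strengthening this to $O(n^{|\pi\vee\sigma|-|\pi|})$ — which is essential when $\pi$ and $\sigma$ are incomparable — is the technical heart of the argument. Once (i) is established, (ii) follows by a clean induction on the rank difference.
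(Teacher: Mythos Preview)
Your approach is essentially correct, and for part (i) it is very close to the paper's: both expand a Neumann series and control the path exponents via the semimodular inequality in $\mc P(k)$. The paper uses the symmetric factorization $G_{kn}=\Theta^{1/2}(1+B)\Theta^{1/2}$ with $B$ off-diagonal and $O(n^{-1/2})$, whereas you use the asymmetric $G_{kn}=DZ$ with $Z=\zeta^T+O(n^{-1})$; your version has the pleasant feature that the zeroth-order term is already the transposed M\"obius matrix. Your phrase ``track the partial joins $\tau_i\vee\pi$'' should really mean the \emph{cumulative} joins $\alpha_i=\pi\vee\tau_1\vee\cdots\vee\tau_i$: from $\rho_i\le\tau_{i-1}\le\alpha_{i-1}$ and semimodularity one gets $|\rho_i\vee\tau_i|-|\rho_i|\le|\alpha_{i-1}\vee\tau_i|-|\alpha_{i-1}|=|\alpha_i|-|\alpha_{i-1}|$, which telescopes to $|\alpha_j|-|\pi|\le|\pi\vee\sigma|-|\pi|$ since $\alpha_j\ge\pi\vee\sigma$. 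This is exactly the paper's chain inequality $|\pi\vee\nu_1|+\cdots+|\nu_l\vee\sigma|\le|\pi\vee\sigma|+|\nu_1|+\cdots+|\nu_l|$, reached by a slightly different route.

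For (ii) your method is genuinely different. The paper reads the $n^{-|\pi|}$-coefficient directly off the Neumann expansion, observing that the contributing paths are precisely the strict chains $\pi<\nu_1<\cdots<\nu_l<\sigma$ and invoking the alternating-chain formula for $\mu_k$. You instead induct on $|\pi|-|\sigma|$ via the identity $\sum_\tau W_{kn}(\pi,\tau)\,n^{|\tau\vee\sigma|}=\delta_{\pi\sigma}$, which is cleaner once (i) is available and uses only the defining recursion for $\mu_k$. One slip: the inequalities you cite are transposed. To get exponent $\le|\sigma|-|\pi|$ you want $|\pi\vee\tau|\le|\tau|$ (equality iff $\pi\le\tau$) and $|\tau\vee\sigma|\le|\sigma|$ (equality iff $\tau\le\sigma$), which together give equality precisely on $[\pi,\sigma]$. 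With that correction your induction goes through verbatim.
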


\begin{proof}
First note that
\begin{equation*}
 G_{kn} = \Theta_{kn}^{1/2}(1 + B_{kn})\Theta_{kn}^{(1/2)},
\end{equation*}
where 
\begin{align*}
 \Theta_{kn}(\pi,\sigma) &= \begin{cases} n^{|\pi|} & \pi = \sigma, \\ 0 & \pi \neq \sigma, \end{cases}\\
B_{kn}(\pi,\sigma) &= \begin{cases} 0 & \pi = \sigma,\\ n^{|\pi \vee \sigma| - \tfrac{|\pi| + |\sigma|}{2}} & \pi \neq \sigma. \end{cases}
\end{align*}
Therefore
\begin{equation*}
 W_{kn} = \Theta_{kn}^{-1/2}(1 + B_{kn})^{-1}\Theta_{kn}^{-1/2}.
\end{equation*}
Note that $B_{kn}$ is $O(n^{-1/2})$, in particular for $n$ sufficiently large $1 + B_{kn}$ is invertible and
\begin{equation*}
 (1 + B_{kn})^{-1} = 1  - B_{kn} + \sum_{l \geq 1} (-1)^{l+1} B_{kn}^{l+1},
\end{equation*}
and hence
\begin{equation*}
W_{kn}(\pi,\sigma) =  \sum_{l \geq 1} (-1)^{l+1} (\Theta_{kn}^{-1/2}B_{kn}^{l+1}\Theta_{kn}^{-1/2})(\pi,\sigma) + \begin{cases}n^{-|\pi|}, & \pi = \sigma\\ -n^{|\pi \vee \sigma| - |\pi| - |\sigma|}, & \pi \neq \sigma \end{cases}.
\end{equation*}
Now for $l \geq 1$ we have
\begin{equation*}
 (\Theta_{kn}^{-1/2}B_{kn}^{l+1}\Theta_{kn}^{-1/2})(\pi,\sigma) = \sum_{\substack{\nu_1,\dotsc,\nu_l \in NC(k)\\ \pi \neq \nu_1 \neq \dotsb \neq \nu_l \neq \sigma}} n^{|\pi \vee \nu_1| + |\nu_1 \vee \nu_2| + \dotsb + |\nu_l \vee \sigma| - |\nu_1| - \dotsb - |\nu_l| - |\pi| - |\sigma|}.
\end{equation*}
So to prove (i) it suffices to show that if $\nu_1,\dotsc,\nu_l \in NC(k)$, then
\begin{equation*}
 |\pi \vee \nu_1| + |\nu_1 \vee \nu_2| + \dotsb + |\nu_l \vee \sigma|  \leq |\pi \vee \sigma| + |\nu_1| + \dotsb + |\nu_l|.
\end{equation*}

We will need the following fact (\cite{birkhoff}*{\S I.8, Example 9}): If $\nu,\tau \in \eu{P}(k)$ then
\begin{equation*}
 |\nu| + |\tau| \leq |\nu \vee \tau| + |\nu \wedge \tau|.
\end{equation*}
(This amounts to the fact that $\mc{P}(k)$ is a \textit{semi-modular lattice}).
  
We will now prove the claim by induction on $l$, for $l = 1$ we may apply the formula above to find
\begin{align*}
 |\pi \vee \nu| + |\nu \vee \sigma| &\leq |(\pi \vee \nu) \vee (\nu \vee \sigma)| + |(\pi \vee \nu) \wedge (\nu \vee \sigma)|\\
&\leq |\pi \vee \sigma| + |\nu|.
\end{align*}
Now let $l > 1$, by induction we have
\begin{equation*}
 |\pi \vee \nu_1| + |\nu_1 \vee \nu_2| + \dotsb + |\nu_{l-1} \vee \nu_l|  \leq |\pi \vee \nu_l| +  |\nu_1| + \dotsb + |\nu_{l-1}|.
\end{equation*}
Also $|\nu_l \vee \sigma| \leq |\pi \vee \sigma| + |\nu_l| - |\pi \vee \nu_l|$, and the result follows.  

To prove (ii), suppose $\pi, \sigma \in NC(k)$ and $\pi \leq \sigma$.  The terms which contribute to order $n^{-|\pi|}$ in the expansion come from sequences $\nu_1,\dotsc,\nu_l \in NC(k)$ such that $\pi \neq \nu_1 \neq \dotsb \neq \nu_l \neq \sigma$ and
\begin{equation*}
 |\pi \vee \nu_1| + \dotsb + |\nu_l \vee \sigma| = |\sigma| + |\nu_1| + \dotsb + |\nu_l|.
\end{equation*}
Since $|\pi \vee \nu_1| \leq |\nu_1|$, $|\nu_1 \vee \nu_2| \leq |\nu_2|,\dotsc,|\nu_l \vee \sigma| \leq \sigma$, it follows that each of these must be an equality, which implies $\pi < \nu_1 < \dotsb < \nu_l < \sigma$.  Conversely, any $\nu_1,\dotsc,\nu_l \in NC(k)$ such that $\pi < \nu_1 < \dotsb < \nu_l < \sigma$ clearly satisfy this equation.  Therefore the coefficient of $n^{-|\pi|}$ in $W_{kn}(\pi,\sigma)$ is 
\begin{equation*}
\begin{cases} 
1, & \pi = \sigma\\
-1 + \sum_{l=1}^\infty (-1)^{l+1} |\{\nu_1,\dotsc,\nu_l \in NC(k): \pi < \nu_1 < \dotsb < \nu_l < \sigma\}|, & \pi < \sigma 
\end{cases}.
\end{equation*}
which is precisely $\mu_k(\pi,\sigma)$.
\end{proof}

\begin{rmk}
To get a handle on the conditional expectation $E_{\eu{QE}_n}$, we will need the following lemma.
\end{rmk}

\begin{lem}\label{expform}
Let $\beta_1,\dotsc,\beta_k \in C * \eu{QE}$ and $1 \leq j_1,\dotsc,j_k \leq n$.  If $\pi \in NC(k)$, $\pi \leq \ker \mathbf j$, then
\begin{equation*}
 E_{\eu{QE}_n}^{(\pi)}[\widetilde \rho_{j_1}(\beta_1) \otimes \dotsb \otimes \widetilde \rho_{j_k}(\beta_k)] = \frac{1}{n^{|\pi|}}\sum_{\substack{1 \leq i_1,\dotsc,i_k \leq n\\ \pi \leq \ker \mathbf i}} \widetilde \rho_{i_1}(\beta_1)\dotsb \widetilde \rho_{i_k}(\beta_k).
\end{equation*}

\end{lem}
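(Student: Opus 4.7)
The plan is to prove the formula by induction on $|\pi|$, the number of blocks of $\pi$.

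For the base case $|\pi| = 1$, I would observe that $\pi = 1_k$ and the hypothesis $\pi \leq \ker \mathbf j$ forces $j_1 = \dotsb = j_k =: j$. By the recursive definition, $E_{\eu{QE}_n}^{(1_k)}[\widetilde \rho_j(\beta_1) \otimes \dotsb \otimes \widetilde \rho_j(\beta_k)] = E_{\eu{QE}_n}[\widetilde \rho_j(\beta_1)\dotsb \widetilde \rho_j(\beta_k)]$, to which I apply the integration formula from Proposition \ref{finrep}. Since $u_{i'j}u_{i''j} = \delta_{i'i''}u_{i'j}$ (orthogonality of projections down a single column of the magic unitary), the product $u_{i_1 j}\dotsb u_{i_k j}$ vanishes unless $i_1 = \dotsb = i_k$, in which case $\psi_n(u_{ij}) = 1/n$. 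The sum therefore collapses to $\frac{1}{n}\sum_{i=1}^n \widetilde \rho_i(\beta_1)\dotsb \widetilde \rho_i(\beta_k)$, which matches the right-hand side of the claimed identity since $\pi = 1_k \leq \ker \mathbf i$ exactly picks out the constant tuples.

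For the inductive step, assume $|\pi| > 1$. Since $\pi$ is non-crossing, it has an interval block $V = \{l+1,\dotsc,l+s\}$; I will choose one with $l \geq 1$ if possible, and handle the symmetric edge case $l = 0$ by absorbing into position $l+s+1$ from the left. Because $V$ is a block of $\pi$ and $\pi \leq \ker \mathbf j$, we have $j_{l+1} = \dotsb = j_{l+s} =: j$. The base case applied to $V$ yields
\begin{equation*}
b_V := E_{\eu{QE}_n}^{(s)}[\widetilde \rho_j(\beta_{l+1}) \otimes \dotsb \otimes \widetilde \rho_j(\beta_{l+s})] = \frac{1}{n}\sum_{i=1}^n \widetilde \rho_i(\beta_{l+1})\dotsb \widetilde \rho_i(\beta_{l+s}) \in \eu{QE}_n,
\end{equation*}
where membership in $\eu{QE}_n$ is automatic since $E_{\eu{QE}_n}$ is the conditional expectation onto the fixed point algebra (Proposition \ref{fincoact}). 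The recursive definition of $E_{\eu{QE}_n}^{(\pi)}$ then absorbs $b_V$ into the adjacent tensor slot, and using that $\widetilde\rho_{j_l}|_{\eu{QE}_n} = \mathrm{id}$ I rewrite $\widetilde\rho_{j_l}(\beta_l)\cdot b_V = \widetilde\rho_{j_l}(\beta_l \cdot b_V)$ with $\beta_l \cdot b_V \in C * \eu{QE}_n$. I may then apply the induction hypothesis to $\pi \setminus V$, which has $|\pi|-1$ blocks and satisfies $(\pi \setminus V) \leq \ker \mathbf j'$, where $\mathbf j'$ is the restriction of $\mathbf j$ to indices outside $V$.

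The final step is a bookkeeping identification. Re-expanding $b_V$ as a sum over a dummy index, which I rename $i_{l+1} = \dotsb = i_{l+s}$, and combining with the inductive sum over $(i_r)_{r \notin V}$ constrained by $(\pi \setminus V) \leq \ker \mathbf i'$, yields precisely the sum over all $(i_1,\dotsc,i_k)$ with $\pi \leq \ker \mathbf i$. The prefactors $\frac{1}{n^{|\pi|-1}}$ (from the IH) and $\frac{1}{n}$ (from $b_V$) combine to give $\frac{1}{n^{|\pi|}}$, completing the induction. I do not anticipate any serious obstacle; the lemma is essentially a careful unpacking of the recursive definition of $E^{(\pi)}$ against the explicit integration formula for $E_{\eu{QE}_n}$ from Proposition \ref{finrep}, with the non-crossing interval-block structure ensuring that the pieces reassemble cleanly at each stage.
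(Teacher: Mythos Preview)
Your proposal is correct and follows essentially the same approach as the paper: induction on $|\pi|$, with the base case handled via Proposition \ref{finrep} and the column-orthogonality of the magic unitary, and the inductive step peeling off an interval block $V$ and invoking the hypothesis on $\pi \setminus V$. The only cosmetic difference is direction---you start from the recursive definition of $E_{\eu{QE}_n}^{(\pi)}$ and expand toward the sum, whereas the paper starts from the sum and collapses toward the recursive definition---and you are slightly more explicit about the edge case $l=0$ and about why $\widetilde\rho_{j_l}(\beta_l)\cdot b_V = \widetilde\rho_{j_l}(\beta_l\cdot b_V)$, points the paper leaves implicit.
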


\begin{proof}
We will prove the lemma by induction on the number of blocks of $\pi$.  If $\pi = 1_k$, then $j_1 = \dotsb = j_k = j$ and
\begin{align*}
 E_{\eu{QE}_n}^{(\pi)}[\widetilde \rho_{j_1}(\beta_1) \otimes \dotsb \otimes \widetilde \rho_{j_k}(\beta_k)] &= E_{\eu{QE}_n}[\widetilde\rho_{j_1}(\beta_1)\dotsb \widetilde\rho_{j_k}(\beta_k)]\\
&= \sum_{1 \leq i_1,\dotsc,i_k \leq n} \widetilde \rho_{i_1}(\beta_1)\dotsb \widetilde \rho_{i_k}(\beta_k)\psi_n(u_{i_1j}\dotsb u_{i_k j})\\
&= \frac{1}{n} \sum_{i=1}^n \widetilde \rho_i(\beta_1)\dotsb \widetilde \rho_i(\beta_k).
\end{align*}
Otherwise let $V = \{l+1,\dotsc,l+s\}$ be an interval of $\pi$.  Then
\begin{align*}
& \frac{1}{n^{|\pi|}}\sum_{\substack{1 \leq i_1,\dotsc,i_k \leq n\\ \pi \leq \ker \mathbf i}} \widetilde \rho_{i_1}(\beta_1)\dotsb \widetilde \rho_{i_k}(\beta_k)\\
&= \frac{1}{n^{|\pi|-1}} \sum_{\substack{1 \leq i_1,\dotsc,i_{l},i_{l+s+1},\dotsc,i_k \leq n\\ (\pi \setminus V) \leq \ker \mathbf i}} \widetilde \rho_{i_1}(\beta_1)\dotsb \biggl(\frac{1}{n}\sum_{i=1}^n \widetilde \rho_i(\beta_{l+1})\dotsb \widetilde \rho_i(\beta_{l+s}) \biggr)\dotsb \widetilde \rho_{i_k}(\beta_k)\\
&= \frac{1}{n^{|\pi|-1}} \sum_{\substack{1 \leq i_1,\dotsc,i_{l-1},i_{l+s+1},\dotsc,i_k \leq n\\ (\pi \setminus V) \leq \ker \mathbf i}} \widetilde \rho_{i_1}(\beta_1)\dotsb \bigl(E_{\eu{QE}_n}[\widetilde \rho_{i_{l+1}}(\beta_{l+1})\dotsb \widetilde \rho_{i_{l+s}}(\beta_{l+s})] \bigr)\dotsb \widetilde \rho_{i_k}(\beta_k).
\end{align*}
By induction, this is equal to
\begin{equation*}
 E_{\eu{QE}_n}^{(\pi\setminus V)}[\widetilde \rho_{j_1}(\beta_1) \otimes \dotsb \otimes \widetilde \rho_{j_{l}}(\beta_{l})E_{\eu{QE}_n}[\widetilde \rho_{j_{l+1}}(\beta_{l+1})\dotsb \widetilde \rho_{j_{l+s}}(\beta_{l+s})] \otimes \dotsb \otimes \widetilde \rho_{j_k}(\beta_{k})],
\end{equation*}
which by definition is equal to
\begin{equation*}
 E_{\eu{QE}}^{(\pi)}[\widetilde \rho_{j_1}(\beta_1) \otimes \dotsb \otimes \widetilde \rho_{j_k}(\beta_k)].
\end{equation*}
\end{proof}

\begin{rmk}
We are now prepared to give our approximation result.
\end{rmk}

\begin{thm}\label{finexc}
Let $C$ be a unital $*$-algebra, $(M,\varphi)$ a W$^*$-probability space and $(\rho_i)_{1 \leq i \leq n}$ a family of unital $*$-homomorphisms of $C$ into $M$.  Suppose that the sequence $(\rho_1,\dotsc,\rho_n)$ is quantum exchangeable.  Then if $(\phi_1,\dotsc,\phi_n)$ is a sequence of unital $*$-homomorphisms from $C$ into a $\eu{QE}_n$-valued probability space $(A,E)$ which is free and identically distributed with respect to $E$ and such that $E \circ \phi_1 = E \circ \rho_1$, then for any $1 \leq j_1,\dotsc,j_k \leq n$ and $\beta_1,\dotsc,\beta_k \in C * \eu{QE}_n$ such that $\|\widetilde \rho_1(\beta_i)\| \leq 1$ for $1 \leq i \leq k$, we have
\begin{equation*}
 \biggl\| E_{\eu{QE}}[\widetilde \rho_{j_1}(\beta_1)\dotsb \widetilde \rho_{j_k}(\beta_k)] - E[\widetilde \phi_{j_1}(\beta_1)\dotsb \widetilde \phi_{j_k}(\beta_k)] \biggr\| \leq \frac{D_k}{n},
\end{equation*}
where $D_k$ is a universal constant which depends only on $k$.
\end{thm}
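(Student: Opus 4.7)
The plan is to combine the Weingarten integration formula for $\psi_n$ with the explicit form of $E_{\eu{QE}_n}$ from Proposition \ref{finrep}, extract the leading asymptotic behavior using Lemma \ref{West}, and identify the resulting main term with the moment-cumulant expansion of the free family $(\phi_i)$. Inserting the Weingarten formula into Proposition \ref{finrep} and applying Lemma \ref{expform} to collapse the inner sum yields
\begin{equation*}
E_{\eu{QE}_n}[\widetilde\rho_{j_1}(\beta_1)\dotsb\widetilde\rho_{j_k}(\beta_k)] = \sum_{\substack{\pi,\sigma \in NC(k)\\ \sigma\leq\ker\mathbf j}} W_{kn}(\pi,\sigma)\,n^{|\pi|}\mu^{(\pi)},
\end{equation*}
where $\mu^{(\pi)} := E_{\eu{QE}_n}^{(\pi)}[\widetilde\rho_{l_1}(\beta_1)\otimes\dotsb\otimes\widetilde\rho_{l_k}(\beta_k)]$ for any $\mathbf l$ with $\pi\leq\ker\mathbf l$; by Lemma \ref{expform} this quantity is independent of the particular choice of $\mathbf l$.

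Next, I would invoke Lemma \ref{West} to separate main term and remainder. It gives $W_{kn}(\pi,\sigma)n^{|\pi|} = \mu_k(\pi,\sigma) + O(n^{-1})$ when $\pi\leq\sigma$ and $W_{kn}(\pi,\sigma)n^{|\pi|} = O(n^{-1})$ otherwise (since then $|\pi\vee\sigma|\leq|\sigma|-1$). The main term becomes $\sum_{\sigma\leq\ker\mathbf j}\sum_{\pi\leq\sigma}\mu_k(\pi,\sigma)\mu^{(\pi)}$, and the remainder $R$ is a finite sum of terms of order $n^{-1}$.

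The key identification is that this main term equals $E[\widetilde\phi_{j_1}(\beta_1)\dotsb\widetilde\phi_{j_k}(\beta_k)]$. Quantum exchangeability together with Proposition \ref{finrep} shows that $(\rho_i)$ is identically distributed with respect to $E_{\eu{QE}_n}$, and the hypothesis $E\circ\phi_1 = E\circ\rho_1$ combined with $\eu{QE}_n$-bimodularity shows $E\circ\widetilde\phi_1 = E_{\eu{QE}_n}\circ\widetilde\rho_1$ on $C*\eu{QE}_n$. A straightforward induction on the number of blocks of $\pi$, exploiting the recursive definition of $E^{(\pi)}$ along an interval block $V = \{l+1,\dotsc,l+s\}$ and the fact that on such a block the homomorphism property collapses $\widetilde\phi_{l_V}(\beta_{l+1})\dotsb\widetilde\phi_{l_V}(\beta_{l+s}) = \widetilde\phi_{l_V}(\beta_{l+1}\dotsb\beta_{l+s})$, then gives
\begin{equation*}
E^{(\pi)}[\widetilde\phi_{l_1}(\beta_1)\otimes\dotsb\otimes\widetilde\phi_{l_k}(\beta_k)] = \mu^{(\pi)}
\end{equation*}
for every $\mathbf l$ with $\pi\leq\ker\mathbf l$. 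The Möbius formula for $B$-valued cumulants then reads $\sum_{\pi\leq\sigma}\mu_k(\pi,\sigma)\mu^{(\pi)} = \kappa_E^{(\sigma)}[\widetilde\phi_{l_1}(\beta_1)\otimes\dotsb\otimes\widetilde\phi_{l_k}(\beta_k)]$, and summing over $\sigma\leq\ker\mathbf j$ and invoking Corollary \ref{vancum} for the free family $(\phi_i)$ produces exactly $E[\widetilde\phi_{j_1}(\beta_1)\dotsb\widetilde\phi_{j_k}(\beta_k)]$.

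For the remainder bound, I would use that $\varphi$ is faithful and the $\widetilde\rho_i$ share a common $*$-distribution to conclude $\|\widetilde\rho_l(\beta_i)\| = \|\widetilde\rho_1(\beta_i)\|\leq 1$ for every $l$; since $E_{\eu{QE}_n}$ is a norm-one conditional expectation, $\|\mu^{(\pi)}\|\leq 1$ uniformly. Each of the at most $|NC(k)|^2$ remainder summands is then bounded by a $k$-dependent constant times $1/n$, yielding the uniform bound $D_k/n$. The main obstacle I anticipate is the identification in the third paragraph — linking the Möbius-inverted moments of the $\rho_i$'s to the cumulants of the free family $(\phi_i)$ — which requires carefully marrying the two identical-distribution statements, the hypothesis $E\circ\phi_1 = E\circ\rho_1$, and the recursive block structure of $NC(k)$.
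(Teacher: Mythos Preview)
Your proposal is correct and follows essentially the same route as the paper's proof: both combine Proposition~\ref{finrep} with the Weingarten formula, invoke Lemma~\ref{expform} to rewrite the inner $\mathbf i$-sum, use Lemma~\ref{West} to isolate the $\mu_k(\pi,\sigma)n^{-|\pi|}$ main term, and identify that main term with the moment--cumulant expansion of the free family via Corollary~\ref{vancum} and the identical-distribution hypothesis. The only cosmetic difference is the order of presentation---you start from the $\rho$-side and extract the main term, whereas the paper starts from the $\phi$-side and expands it first---and that you bound $\|\mu^{(\pi)}\|\leq 1$ via contractivity of $E_{\eu{QE}_n}$ rather than bounding the raw sum by $n^{|\pi|}$, which is equivalent by Lemma~\ref{expform}.
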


\begin{proof}
By Corollary \ref{vancum},
\begin{align*}
  E[\widetilde \phi_{j_1}(\beta_1) \dotsb \widetilde \phi_{j_k}(\beta_k)] &= \sum_{\substack{\sigma \in NC(k)\\ \sigma \leq \ker \mathbf j}} \kappa_E^{(\sigma)}[\widetilde \phi_{j_1}(\beta_1)\otimes \dotsb \otimes \widetilde \phi_{j_k}(\beta_k)]\\
&=\sum_{\substack{\sigma \in NC(k)\\ \sigma \leq \ker \mathbf j}} \sum_{\substack{\pi \in NC(k)\\ \pi \leq \sigma}} \mu_{k}(\pi,\sigma) E^{(\pi)}[\widetilde \phi_{j_1}(\beta_1) \otimes \dotsb \otimes \widetilde \phi_{j_k}(\beta_k)].
\end{align*}
Since $E \circ \widetilde \phi_j = E \circ \rho_1$ for $1 \leq j \leq n$, it follows by induction that 
\begin{align*}
 E^{(\pi)}[\widetilde \phi_{j_1}(\beta_1) \otimes \dotsb \otimes \widetilde \phi_{j_k}(\beta_k)] = E_{\eu{QE}_n}^{(\pi)}[\widetilde \rho_1(\beta_1) \otimes \dotsb \otimes \widetilde \rho_1(\beta_k)]
\end{align*}
for any $\pi \in NC(k)$, $\pi \leq \ker \mathbf j$.  Plugging this in above and applying Lemma \ref{expform}, we have
\begin{align*}
E[\widetilde \phi_{j_1}(\beta_1) \dotsb \widetilde \phi_{j_k}(\beta_k)] &=\sum_{\substack{\sigma \in NC(k)\\ \sigma \leq \ker \mathbf j}} \sum_{\substack{\pi \in NC(k)\\ \pi \leq \sigma}} \mu_k(\pi,\sigma)n^{-|\pi|} \sum_{\substack{1 \leq i_1,\dotsc,i_k \leq n \\ \pi \leq \ker \mathbf i}} \widetilde \rho_{i_1}(\beta_1)\dotsb \widetilde \rho_{i_k}(\beta_k).
\end{align*}
On the other hand, we have
\begin{align*}
 E_{\eu{QE}_n}[\widetilde \rho_{j_1}(\beta_1)\dotsb \widetilde \rho_{j_k}(\beta_k)] &= \sum_{1 \leq i_1,\dotsc,i_k \leq n} \widetilde \rho_{i_1}(\beta_1)\dotsb \widetilde \rho_{i_k}(\beta_k)\psi_n(u_{i_1j_1}\dotsb u_{i_kj_k})\\
&= \sum_{1 \leq i_1,\dotsc,i_k \leq n} \widetilde \rho_{i_1}(\beta_1)\dotsb \widetilde \rho_{i_k}(\beta_k) \sum_{\substack{\pi,\sigma \in NC(k)\\ \pi \leq \ker \mathbf i\\ \sigma \leq \ker \mathbf j}} W_{kn}(\pi,\sigma)\\
&= \sum_{\substack{\sigma \in NC(k)\\ \sigma \leq \ker \mathbf j}} \sum_{\pi \in NC(k)} W_{kn}(\pi,\sigma)\sum_{\substack{1 \leq i_1,\dotsc,i_k \leq n \\ \pi \leq \ker \mathbf i}} \widetilde \rho_{i_1}(\beta_1)\dotsb \widetilde \rho_{i_k}(\beta_k).
\end{align*}

Now since $\|\widetilde \rho_1(\beta_l)\| \leq 1$ for $1 \leq l \leq k$, and $(\rho_1,\dotsc,\rho_k)$ are identically distributed with respect to the faithful state $\varphi$, it follows that $\|\widetilde \rho_m(\beta_l)\| \leq 1$ for any $1 \leq m \leq n$, $1 \leq l \leq k$.  Therefore for any $\pi \in NC(k)$,
\begin{equation*}
 \biggl\| \sum_{\substack{1 \leq i_1,\dotsc,i_k \leq n\\ \pi \leq \ker \mathbf i}} \widetilde \rho_{i_1}(\beta_1)\dotsb \widetilde \rho_{i_k}(\beta_k)\biggr\| \leq n^{|\pi|}.
\end{equation*}
Combining these equations, we find that
\begin{align*}
 \biggl\|E_{\eu{QE}_n}[\widetilde \rho_{i_1}(\beta_1)\dotsb \widetilde \rho_{i_k}(\beta_k)] - E[\widetilde \phi_{i_1}(\beta_1)\dotsb \widetilde \phi_{i_k}(\beta_k)]\biggr\| \leq \sum_{\substack{\sigma \in NC(k)\\ \sigma \leq \ker \mathbf j}} \sum_{\pi \in NC(k)} |W_{kn}(\pi,\sigma)n^{|\pi|} - \mu_k(\pi,\sigma)|.
\end{align*}
Setting
\begin{equation*}
 D_k = \sup_{n \in \N} \; n \cdot \negthickspace\negthickspace \negthickspace \negthickspace \sum_{\pi,\sigma \in NC(k)}|W_{kn}(\pi,\sigma)n^{|\pi|} - \mu_k(\pi,\sigma)|,
\end{equation*}
which is finite by Lemma \ref{West}, completes the proof.

\end{proof}

\section{Infinite quantum exchangeable sequences}

\begin{defn}
Let $C$ be a unital $*$-algebra, $(A,\varphi)$ a noncommutative probability space, and $(\rho_i)_{i \in \N}$ a family of unital $*$-homomorphisms from $C$ into $A$.  We say that $\varphi_\rho$ is \textit{invariant under finite quantum permutations}, or that $(\rho_i)_{i \in \N}$ is \textit{quantum exchangeable}, if $(\rho_1,\dotsc,\rho_n)$ is quantum exchangeable for every $n \in \N$.
\end{defn}

\begin{rmk}
This definition amounts to saying that the joint distribution of $(\rho_1,\dotsc,\rho_n)$ is invariant under the coaction $\alpha_n$ of $\mc A_s(n)$ on $C_n$ for each $n$.  It will be convenient to extend these coactions to $C_\infty$.
\end{rmk}

\begin{rmk}
Let $C$ be a unital $*$-algebra.  For $n \in \N$ we define $\beta_n:C_\infty \to C_\infty \otimes \mc A_s(n)$ to be the unique unital $*$-homomorphism such that
\begin{equation*}
 \beta_n(c^{(j)}) = \begin{cases}
                    \sum_{i=1}^n c^{(i)} \otimes u_{ij}, & 1 \leq j \leq n\\
			c^{(j)} \otimes 1_{A_s(n)}, & j > n
                    \end{cases}
\end{equation*}
for $c \in C$, which is well-defined by the same argument as given for $\alpha_n$ in the previous section.  Then $\beta_n$ is a right coaction of $\mc A_s(n)$ on $C_\infty$, moreover
\begin{equation*}
 (\mathrm{id} \otimes \omega_n) \circ \beta_{n+1} = \beta_n,
\end{equation*}
and
\begin{equation*}
 (\iota_n \otimes \mathrm{id}) \circ \alpha_n = \beta_n \circ \iota_n,
\end{equation*}
where $\iota_n: C_n \to C_\infty$ is the natural inclusion.
\end{rmk}

\begin{prop}
Let $C$ be a unital $*$-algebra, $(A,\varphi)$ a noncommutative probability space and $\rho:C_\infty \to A$ a unital $*$-homomorphism.  Then $(\rho_i)_{i \in \N}$ is quantum exchangeable if and only if $\varphi_\rho$ is invariant under $\beta_n$ for each $n \in \N$.
\end{prop}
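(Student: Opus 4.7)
The proof is an equivalence, and the plan is to chase the given compatibility relations between $\alpha_n$, $\beta_n$, and the quantum subgroup surjections $\omega_n$.

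For the $(\Leftarrow)$ direction the argument is essentially a one-liner. Assume $\varphi_\rho$ is invariant under $\beta_n$ for every $n$. Fix $n$ and $p \in C_n$. The joint distribution of $(\rho_1,\dotsc,\rho_n)$ is the restriction $\varphi_\rho \circ \iota_n$, so using the stated relation $(\iota_n \otimes \mathrm{id})\circ \alpha_n = \beta_n \circ \iota_n$ I compute
\begin{equation*}
((\varphi_\rho \circ \iota_n) \otimes \mathrm{id})\alpha_n(p) = (\varphi_\rho \otimes \mathrm{id})\beta_n(\iota_n(p)) = \varphi_\rho(\iota_n(p))\, 1_{A_s(n)},
\end{equation*}
which is precisely the condition that $(\rho_1,\dotsc,\rho_n)$ is quantum exchangeable.

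For the $(\Rightarrow)$ direction the key observation is that $C_\infty = \bigcup_{N \geq n} \iota_N(C_N)$, so it suffices to verify invariance of $\varphi_\rho$ under $\beta_n$ on each $\iota_N(C_N)$ for $N \geq n$. I would first iterate the compatibility $(\mathrm{id} \otimes \omega_n)\circ\beta_{n+1} = \beta_n$ to get, by an easy induction,
\begin{equation*}
\beta_n = (\mathrm{id} \otimes \omega_{n,N}) \circ \beta_N, \qquad \omega_{n,N} := \omega_n \circ \omega_{n+1} \circ \dotsb \circ \omega_{N-1} : \mc A_s(N) \to \mc A_s(n).
\end{equation*}
Now given $p \in C_\infty$, pick $N \geq n$ large enough that $p = \iota_N(p')$ for some $p' \in C_N$. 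Combining the iterated relation with $\beta_N \circ \iota_N = (\iota_N \otimes \mathrm{id})\alpha_N$ gives
\begin{equation*}
\beta_n(p) = (\mathrm{id} \otimes \omega_{n,N})\beta_N(\iota_N(p')) = (\iota_N \otimes \omega_{n,N})\alpha_N(p'),
\end{equation*}
so that
\begin{equation*}
(\varphi_\rho \otimes \mathrm{id})\beta_n(p) = \omega_{n,N}\bigl(((\varphi_\rho \circ \iota_N) \otimes \mathrm{id})\alpha_N(p')\bigr).
\end{equation*}
By hypothesis $(\rho_1,\dotsc,\rho_N)$ is quantum exchangeable, so the inner expression equals $\varphi_\rho(\iota_N(p'))\, 1_{A_s(N)} = \varphi_\rho(p)\, 1_{A_s(N)}$, and applying $\omega_{n,N}$ (which is unital) yields $\varphi_\rho(p)\, 1_{A_s(n)}$, as desired.

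There is really no hard step; the only item that needs verification is the iterated compatibility $\beta_n = (\mathrm{id} \otimes \omega_{n,N})\circ \beta_N$, which is a routine induction on $N-n$ using the stated identity between consecutive $\beta_{n+1}$ and $\beta_n$. Everything else is linearity plus the fact that every element of $C_\infty$ lives in $\iota_N(C_N)$ for some finite $N$.
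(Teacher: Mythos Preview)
Your proof is correct and follows essentially the same approach as the paper. The only cosmetic difference is that the paper isolates the observation ``invariance under $\beta_n$ implies invariance under $\beta_{n-1}$'' as a preliminary step and then applies it after showing $(\varphi_\rho \otimes \mathrm{id})\beta_N(\iota_N(p')) = \varphi_\rho(p)1_{A_s(N)}$, whereas you compose the $\omega$'s all at once into $\omega_{n,N}$ and perform a single computation; the underlying argument is identical.
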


\begin{proof}
Let $\varphi_\rho^{(n)}:C_n \to \C$ denote the joint distribution of $(\rho_1,\dotsc,\rho_n)$, so that $\varphi_\rho^{(n)} = \varphi_\rho \circ \iota_n$.  Suppose that $\varphi_\rho$ is invariant under $\beta_n$, then for $\gamma \in C_n$ we have
\begin{align*}
 (\varphi_{\rho}^{(n)} \otimes \mathrm{id}) \alpha_n(\gamma) &= (\varphi_\rho \otimes \mathrm{id}) \beta_n(\iota_n(\gamma))\\
&= \varphi_{\rho}^{(n)}(\gamma)1_{A_s(n)}.
\end{align*}

For the converse, we first note that if $\varphi_\rho$ is invariant under $\beta_n$, then it is invariant under $\beta_m$ for $m \leq n$.  Indeed, it suffices to show that if $\varphi_\rho$ is invariant $\beta_{n-1}$.  Let $\gamma \in C_\infty$, then
\begin{align*}
 (\varphi_\rho \otimes \mathrm{id})\beta_{n-1}(\gamma) &= (\varphi_\rho \otimes \mathrm{id})(\mathrm{id} \otimes \omega_{n-1}) \beta_{n}(\gamma)\\
&= (\mathrm{id} \otimes \omega_{n-1})(\varphi_{\rho}(\gamma) \otimes 1_{A_s(n)})\\
&= \varphi_{\rho}(\gamma)1_{A_s(n-1)}.
\end{align*}
Now suppose that $(\rho_1,\dotsc,\rho_n)$ is quantum exchangeable for each $n \in \N$.  Let $m \in \N$ and $\gamma \in C_\infty$, then $\gamma = \iota_n(\gamma')$ for some $\gamma' \in C_n$, $n \geq m$.  We then have
\begin{align*}
 (\varphi_\rho \otimes \mathrm{id}) \beta_n(\gamma) &= (\varphi_\rho^{(n)} \otimes \mathrm{id})\alpha_n(\gamma')\\
&= \varphi_\rho(\gamma)1_{A_s(n)},
\end{align*}
hence $\varphi_\rho$ is invariant under $\beta_m$ and the result follows.
\end{proof}

\begin{rmk}
Throughout the rest of the section, $C$ will be a unital $*$-algebra, $(M,\varphi)$ a W$^*$-probability space and $\rho:C_\infty \to M$ a unital $*$-homomorphism.  We denote the von Neumann algebra generated by $\rho(C_\infty)$ by $M_\infty$, and set $\varphi_\infty = \varphi|_{M_\infty}$.  $L^2(M_\infty,\varphi_\infty)$ will denote the GNS Hilbert space for $\varphi_\infty$, with inner product $\iprod{m_1}{m_2} = \varphi(m_1^*m_2)$.  The strong topology on $M_\infty$ will be taken with respect to the faithful representation on $L^2(M_\infty,\varphi_\infty)$.  By a slight abuse of notation, we denote
\begin{equation*}
 \eu{QE}_n = \mathrm{W}^*\bigl(\{\rho(c): c \in C_\infty^{\beta_n}\}\bigr),
\end{equation*}
where $C_\infty^{\beta_n}$ denotes the fixed point algebra of the coaction $\beta_n$.  Since
\begin{equation*}
 (\mathrm{id} \otimes \omega_n) \circ \beta_{n+1} = \beta_n,
\end{equation*}
it follows that $\eu{QE}_{n+1} \subset \eu{QE}_n$ for all $n \geq 1$.  We then define the \textit{quantum exchangeable subalgebra} by
\begin{equation*}
 \eu{QE} = \bigcap_{n \geq 1} \eu{QE}_n.
\end{equation*}

\end{rmk}

\begin{rmk}
If $\varphi_\rho$ is invariant under quantum permutations, then the same argument as in Proposition \ref{fincoact} shows that for each $n \in \N$, there is a right coaction $\widetilde \beta_n: M_\infty \to M_\infty \otimes \frk A_s(n)$ determined by
\begin{equation*}
 \widetilde \beta_n(\rho(c)) = (\rho \otimes \pi_{\psi_n})\beta_n(c)
\end{equation*}
for $c \in C_\infty$, and moreover the fixed point algebra of $\widetilde \beta_n$ is precisely $\eu{QE}_n$.  For each $n$ there is then a $\varphi_\infty$-preserving conditional expectation $E_{\eu{QE}_n}:M_\infty \to \eu{QE}_n$ given by integrating the coaction, i.e.
\begin{equation*}
 E_{\eu{QE}_n}[m] = (\mathrm{id} \otimes \psi_n) \widetilde \beta_n(m)
\end{equation*}
for $m \in M_\infty$.  The next proposition shows that by taking the limit of these maps, we obtain a $\varphi_\infty$-preserving conditional expectation onto the quantum exchangeable subalgebra.
\end{rmk}

\begin{prop}\label{explim}
Suppose that $(\rho_i)_{i \in \N}$ is quantum exchangeable.  
\begin{enumerate}
\renewcommand{\labelenumi}{(\roman{enumi})}
 \item For any $m \in M_\infty$, the sequence $E_{\eu{QE}_n}(m)$ converges in $|\;|_2$ and in the strong topology to a limit $E_{\eu{QE}}(m) \in \eu{QE}$, moreover $E_{\eu{QE}}$ is a $\varphi_\infty$-preserving conditional expectation from $M_\infty$ onto $\eu{QE}$.
\item Fix $\pi \in NC(k)$ and $m_1,\dotsc,m_k \in M_\infty$, then
\begin{equation*}
 E_{\eu{QE}}^{(\pi)}[m_1 \otimes \dotsb \otimes m_k] = \lim_{n \to \infty} E_{\eu{QE}_n}^{(\pi)}[m_1 \otimes \dotsb \otimes m_k],
\end{equation*}
with convergence in the strong topology.
\end{enumerate}

\end{prop}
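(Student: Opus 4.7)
The plan is to realize each $E_{\eu{QE}_n}$ as an orthogonal projection on the GNS Hilbert space and exploit the fact that the subalgebras $\eu{QE}_n$ are decreasing. Concretely, each $\varphi_\infty$-preserving conditional expectation $E_{\eu{QE}_n}$ extends to the orthogonal projection $P_n:L^2(M_\infty,\varphi_\infty) \to \overline{\eu{QE}_n \xi_\varphi}$, characterized by $P_n(m\xi_\varphi) = E_{\eu{QE}_n}(m)\xi_\varphi$. Since $\eu{QE}_{n+1} \subset \eu{QE}_n$, the ranges form a decreasing family of closed subspaces, so $P_n$ converges strongly on $L^2(M_\infty,\varphi_\infty)$ to the projection $P_\infty$ onto the intersection.

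For each $m \in M_\infty$, the sequence $(E_{\eu{QE}_n}(m))$ is uniformly bounded by $\|m\|$, so by Banach--Alaoglu it has an ultraweak cluster point $a$; since $E_{\eu{QE}_k}(m) \in \eu{QE}_{n_0}$ for $k \geq n_0$ and $\eu{QE}_{n_0}$ is ultraweakly closed, $a \in \bigcap_{n_0} \eu{QE}_{n_0} = \eu{QE}$. Since $a\xi_\varphi = P_\infty(m\xi_\varphi)$ and $\xi_\varphi$ is separating, $a$ is independent of the choice of cluster point; I would define $E_{\eu{QE}}(m) := a$. Convergence in the $L^2$ norm is then immediate. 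To upgrade to strong operator convergence, I would use the modular conjugation $J$ associated to $\xi_\varphi$: for any $m' \in M_\infty$,
\begin{equation*}
\bigl(E_{\eu{QE}_n}(m)-E_{\eu{QE}}(m)\bigr) m' \xi_\varphi = J(m')^*J \bigl(E_{\eu{QE}_n}(m)-E_{\eu{QE}}(m)\bigr) \xi_\varphi,
\end{equation*}
whose $L^2$-norm is at most $\|m'\| \cdot \|(E_{\eu{QE}_n}(m)-E_{\eu{QE}}(m))\xi_\varphi\|_2 \to 0$; combined with the uniform operator-norm bound $2\|m\|$ and density of $M_\infty \xi_\varphi$ this yields strong convergence. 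The map $E_{\eu{QE}}$ is then readily verified to be a $\varphi_\infty$-preserving conditional expectation onto $\eu{QE}$: linearity is inherited, it fixes $\eu{QE}$ (as each $E_{\eu{QE}_n}$ does), the $\eu{QE}$-bimodule property follows from joint strong-continuity of multiplication on bounded sets together with the corresponding property of each $E_{\eu{QE}_n}$ over $\eu{QE} \subset \eu{QE}_n$, and $\varphi_\infty \circ E_{\eu{QE}} = \varphi_\infty$ follows from $P_\infty \xi_\varphi = \xi_\varphi$.

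For part (ii) I would induct on $|\pi|$, proving the slightly stronger statement that if $(b_{j,n})_{n \geq 1}$ are bounded sequences in $M_\infty$ with $b_{j,n} \to b_j$ strongly for $1 \leq j \leq k$, then $E_{\eu{QE}_n}^{(\pi)}[b_{1,n} \otimes \dotsb \otimes b_{k,n}]$ converges strongly to $E_{\eu{QE}}^{(\pi)}[b_1 \otimes \dotsb \otimes b_k]$. The strengthening is forced because in the recursive reduction the inner expectation $E_{\eu{QE}_n}(b_{l+1}\dotsb b_{l+s})$ itself varies with $n$. The base case $|\pi|=1$ reduces to showing $E_{\eu{QE}_n}(a_n) \to E_{\eu{QE}}(a)$ strongly whenever $a_n \to a$ strongly with $\sup_n \|a_n\| < \infty$, which I handle by splitting $E_{\eu{QE}_n}(a_n) - E_{\eu{QE}}(a) = E_{\eu{QE}_n}(a_n - a) + (E_{\eu{QE}_n}(a) - E_{\eu{QE}}(a))$; the second summand converges strongly by part (i), while the first satisfies $\|E_{\eu{QE}_n}(a_n-a)\xi_\varphi\|_2 \leq \|(a_n-a)\xi_\varphi\|_2 \to 0$ by the $L^2$-contractivity of $P_n$, and then the modular-conjugation argument upgrades this to strong convergence. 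For the inductive step I would select an interval block $V = \{l+1,\dotsc,l+s\}$ of $\pi$, apply the base case to obtain strong convergence of the inner expectation $E_{\eu{QE}_n}(b_{l+1,n}\dotsb b_{l+s,n})$, combine with $b_{l,n}$ by joint strong continuity on bounded sets, and invoke the inductive hypothesis on $\pi \setminus V$, which has $|\pi|-1$ blocks. The main obstacle throughout is passing from $L^2$-convergence to strong-operator convergence in the non-tracial W$^*$ setting, resolved at each use by the modular identity $m' \xi_\varphi = J(m')^* J \xi_\varphi$.
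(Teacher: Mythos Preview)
Your overall strategy is sound, but the modular identity you rely on is false in the non-tracial case. You assert $m'\xi_\varphi = J(m')^*J\xi_\varphi$; however, since $J\xi_\varphi = \xi_\varphi$ and $J(m')^*\xi_\varphi = JSm'\xi_\varphi = \Delta^{1/2}m'\xi_\varphi$, one actually has $J(m')^*J\xi_\varphi = \Delta^{1/2}m'\xi_\varphi$, which coincides with $m'\xi_\varphi$ only when $m'\xi_\varphi$ is fixed by $\Delta^{1/2}$. Thus the displayed equation and the ``modular identity'' in your final sentence both fail precisely in the non-tracial setting you single out as the main obstacle, and the passage from $|\;|_2$-convergence to strong convergence is not justified as written.

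The repair is easy and needs no modular theory: test on $M_\infty'\xi_\varphi$ rather than $M_\infty\xi_\varphi$. Since $\xi_\varphi$ is separating for $M_\infty$ it is cyclic for $M_\infty'$, so $M_\infty'\xi_\varphi$ is dense in $L^2(M_\infty,\varphi_\infty)$; for $y \in M_\infty'$ and $x_n \in M_\infty$ uniformly bounded with $x_n\xi_\varphi \to 0$, one has directly $x_n(y\xi_\varphi) = y(x_n\xi_\varphi) \to 0$. With this correction both part (i) and the base and inductive steps of part (ii) go through, and the Banach--Alaoglu detour is also unnecessary (the limit operator is determined by its action on $\xi_\varphi$).

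By way of comparison, the paper's argument is shorter: it invokes the Jones relation $E_{\eu{QE}_n}(m) = P_n m P_n$ and reads off convergence from $P_n \to P$. The gain is in part (ii), where the paper simply observes that $E_{\eu{QE}_n}^{(\pi)}[m_1\otimes\dotsb\otimes m_k]$ is a fixed word in $m_1,\dotsc,m_k$ and $P_n$ (with $P_n$'s inserted according to the nesting of $\pi$), so strong convergence follows at once from joint strong continuity of multiplication on bounded sets, with no induction or strengthened hypothesis required.
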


\begin{proof}
Let $\phi_n = \varphi_\infty|_{\eu{QE}_n}$ and let $L^2(\eu{QE}_n,\phi_n)$ denote the GNS Hilbert space, which can be viewed as a closed subspace of $L^2(M_\infty,\varphi_\infty)$.  Let $P_n \in \mc B(L^2(M_\infty,\varphi_\infty))$ be the orthogonal projection onto $L^2(\eu{QE}_n,\phi_n)$.  Since $E_{\eu{QE}_n}:M_\infty \to \eu{QE}_n$ is a conditional expectation such that $\phi_n \circ E_{\eu{QE}_n} = \varphi_\infty$, it follows (see e.g. \cite{blackadar}*{Proposition II.6.10.7}) that 
\begin{equation*}
 E_{\eu{QE}_n}(m) = P_n m P_n
\end{equation*}
for $m \in M_\infty$.  Since $P_n$ converges strongly as $n \to \infty$ to $P$, where
\begin{equation*}
 P = \bigwedge_{n \geq 1} P_n
\end{equation*}
is the orthogonal projection onto $L^2(\eu{QE},\varphi_\infty|_{\eu{QE}})$, it follows that 
\begin{equation*}
 E_{\eu{QE}_n}(m) \to PmP
\end{equation*}
in $|\;|_2$ and the strong operator topology as $n \to \infty$.  Set $E_{\eu{QE}}(m) = PmP$, then since $E_{\eu{QE}_n}(m)$ converges strongly to $E_{\eu{QE}}(m)$ it follows that $E_{\eu{QE}}(m) \in \eu{QE}$, and it is then easy to see that $E_{\eu{QE}}$ is a $\varphi_\infty$-preserving conditional expectation.

To prove (ii), observe that if $\pi \in NC(k)$ and $m_1,\dotsc,m_k \in M_\infty$, then $E_{\eu{QE}_n}^{(\pi)}[m_1 \otimes \dotsb \otimes m_k]$ is a word in $m_1,\dotsc,m_k$ and $P_n$.  For example, if 
\begin{equation*} 
\pi = \{\{1,8,9,10\},\{2,7\},\{3,4,5\}, \{6\}\} \in NC(10),
\end{equation*}
\begin{equation*}
 \setlength{\unitlength}{0.6cm} \begin{picture}(9,4)\thicklines \put(0,0){\line(0,1){3}}
\put(0,0){\line(1,0){9}} \put(9,0){\line(0,1){3}} \put(8,0){\line(0,1){3}} \put(7,0){\line(0,1){3}} 
\put(1,1){\line(1,0){5}} \put(1,1){\line(0,1){2}} \put(6,1){\line(0,1){2}}
\put(2,2){\line(1,0){2}} \put(2,2){\line(0,1){1}} \put(3,2){\line(0,1){1}} \put(4,2){\line(0,1){1}}
\put(5,2){\line(0,1){1}}
\put(-0.1,3.3){1} \put(0.9,3.3){2} \put(1.9,3.3){3}
\put(2.9,3.3){4} \put(3.9,3.3){5} \put(4.9,3.3){6} \put(5.9,3.3){7} \put(6.9,3.3){8}
\put(7.9,3.3){9} \put(8.7,3.3){10}
\end{picture}
\end{equation*}
then the corresponding expression is
\begin{equation*}
 E_{\eu{QE}_n}^{(\pi)}[m_1 \otimes \dotsb \otimes m_{10}] =  P_nm_1P_nm_2P_nm_3m_4m_5P_nm_6P_nm_7P_nm_8m_9m_{10}P_n.
\end{equation*}
Since multiplication is jointly continuous on bounded sets in the strong topology, this converges as $n$ goes to infinity to the expression obtained by replacing $P_n$ by $P$, which is exactly $E_{\eu{QE}}^{(\pi)}[m_1 \otimes \dotsb \otimes m_k]$.
\end{proof}

\begin{rmk}
We are now prepared to prove the free de Finetti theorem.
\end{rmk}

\begin{proof}[Proof of Theorem \ref{definetti}]
The implication (ii)$\Rightarrow$(i) follows from Proposition \ref{freeexch}.  For the other direction, first note that for $\beta \in C * \eu{QE}$ and $j \in \N$,
\begin{align*}
 E_{\eu{QE}}[\widetilde \rho_j(\beta)] &= \lim_{n \to \infty}\sum_{i=1}^n \widetilde \rho_{i}(\beta)\psi_n(u_{ij})\\
&= \lim_{n \to \infty} \frac{1}{n}\sum_{i=1}^n \widetilde \rho_i(\beta),
\end{align*}
with convergence in the strong topology and $|\;|_2$.  In particular, the sequence $(\rho_i)_{i \in \N}$ is identically distributed with respect to $E_{\eu{QE}}$.

Now let $j_1,\dotsc,j_k \in \N$ and $\beta_1,\dotsc,\beta_k \in C * \eu{QE}$.  As in the proof of Theorem \ref{finexc}, we have
\begin{align*}
 E_{\eu{QE}}[\widetilde \rho_{j_1}(\beta_1)\dotsb \widetilde \rho_{j_k}(\beta_k)] &= \lim_{n \to \infty} E_{\eu{QE}_n}[\widetilde \rho_{j_1}(\beta_1)\dotsb \widetilde \rho_{j_k}(\beta_k)]\\
&= \lim_{n \to \infty} \sum_{ \substack{\sigma \in NC(k)\\ \sigma \leq \ker \mathbf j}} \sum_{ \pi \in NC(k)} W_{kn}(\pi,\sigma) \sum_{\substack{1 \leq i_1,\dotsc,i_k \leq n\\ \pi \leq \ker \mathbf i}} \widetilde \rho_{i_1}(\beta_1)\dotsb \widetilde \rho_{i_k}(\beta_k)\\
&= \lim_{n \to \infty} \sum_{\substack{\sigma \in NC(k)\\ \sigma \leq \ker \mathbf j}} \sum_{\substack{\pi \in NC(k)\\ \pi \leq \sigma}} \mu_k(\pi,\sigma) \biggl(\frac{1}{n^{|\pi|}}\sum_{\substack{1 \leq i_1,\dotsc,i_k \leq n\\ \pi \leq \ker \mathbf i}} \widetilde \rho_{i_1}(\beta_1)\dotsb \widetilde \rho_{i_k}(\beta_k)\biggr).
\end{align*}
Applying Lemma \ref{expform}, we have
\begin{align*}
 E_{\eu{QE}}[\widetilde \rho_{i_1}(\beta_1)\dotsb \widetilde \rho_{i_k}(\beta_k)] &= \lim_{n \to \infty} \sum_{\substack{\sigma \in NC(k)\\ \sigma \leq \ker \mathbf j}} \sum_{ \substack{\pi \in NC(k)\\ \pi \leq \sigma}} \mu_k(\pi,\sigma)E_{\eu{QE}_n}^{(\pi)}[\widetilde \rho_{j_1}(\beta_1) \otimes \dotsb \otimes \widetilde \rho_{j_k}(\beta_k)].\\
\end{align*}
By (ii) of Proposition \ref{explim}, we may take the limit inside to obtain
\begin{align*}
 E_{\eu{QE}}[\widetilde \rho_{j_1}(\beta_1)\dotsb \widetilde \rho_{j_k}(\beta_k)] &= \sum_{\substack{\sigma \in NC(k)\\ \sigma \leq \ker \mathbf j}} \sum_{\substack{\pi \in NC(k)\\ \pi \leq \sigma}} \mu_k(\pi,\sigma)E_{\eu{QE}}^{(\pi)}[\widetilde \rho_{j_1}(\beta_1) \otimes \dotsb \otimes \widetilde \rho_{j_k}(\beta_k)]\\
&= \sum_{ \substack{\sigma \in NC(k)\\ \sigma \leq \ker \mathbf j}}\kappa_{E_{\eu{QE}}}^{(\sigma)}[\widetilde \rho_{j_1}(\beta_1) \otimes \dotsb \otimes \widetilde \rho_{j_k}(\beta_k)].
\end{align*}
The result now follows from Proposition \ref{vancum}.
\end{proof}

\begin{rmk} In \cite{spekos}, K\"{o}stler and Speicher showed that a quantum exchangeable sequence $(x_i)_{i \in \N}$ in $(M,\varphi)$ is identically distributed and free with amalgamation over the \textit{tail algebra}.  We note that for a quantum exchangeable sequence, these algebras are the same.
\end{rmk}

\begin{prop}
Define the \textit{tail algebra} of $(\rho_i)_{i \in \N}$ by
\begin{equation*}
 \eu{T} = \bigcap_{n \geq 1} \mathrm{W}^*\biggl(\bigcup_{j \geq n} \rho_j(C)\biggr).
\end{equation*}
If $(\rho_i)_{i \in \N}$ is quantum exchangeable, then
\begin{equation*}
\eu{QE} = \eu T.
\end{equation*}

\end{prop}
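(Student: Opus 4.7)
The plan is to prove the two inclusions separately. The inclusion $\eu T \subset \eu{QE}$ is immediate: since $\beta_n(c^{(j)}) = c^{(j)} \otimes 1_{A_s(n)}$ for $j > n$, every $\rho_j(C)$ with $j > n$ is contained in $\eu{QE}_n$, so $\mathrm{W}^*\bigl(\bigcup_{j > n} \rho_j(C)\bigr) \subset \eu{QE}_n$, and intersecting over $n$ yields $\eu T \subset \eu{QE}$.

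For the reverse inclusion $\eu{QE} \subset \eu T$, the strategy is to show that $E_{\eu{QE}}[\rho_{j_1}(c_1)\dotsb \rho_{j_k}(c_k)]$ lies in $\mathrm{W}^*\bigl(\bigcup_{j > N} \rho_j(C)\bigr)$ for every $N \in \N$ and every product of generators. Once this is in hand, the conclusion follows from normality of $E_{\eu{QE}}$ together with strong density in $M_\infty$ of the $*$-algebra generated by such products, yielding $\eu{QE} = E_{\eu{QE}}(M_\infty) \subset \eu T$.

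To carry this out, I would use the expansion developed in the proof of Theorem \ref{definetti}: by Lemma \ref{expform} combined with the Banica--Collins Haar-state formula, $E_{\eu{QE}}[\rho_{j_1}(c_1)\dotsb \rho_{j_k}(c_k)]$ is a M\"{o}bius-weighted linear combination of the strong-operator limits
\[
A^*(\pi) \;=\; \lim_{n \to \infty} \frac{1}{n^{|\pi|}} \sum_{\substack{1 \leq i_1,\dotsc,i_k \leq n \\ \pi \leq \ker \mathbf i}} \rho_{i_1}(c_1)\dotsb \rho_{i_k}(c_k),
\]
taken over $\pi \in NC(k)$ with $\pi \leq \ker \mathbf j$. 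The decisive observation is that replacing the range $1 \leq i_l \leq n$ by the shifted range $N+1 \leq i_l \leq N+n$ produces a modified partial sum $A_n^{(N)}(\pi)$ whose symmetric difference with $A_n(\pi)$ contains only $O(N \cdot n^{|\pi|-1})$ index tuples. Identical distribution with respect to the faithful state $\varphi$ gives $\|\rho_i(c_l)\| = \|\rho_1(c_l)\|$, so the summands are uniformly bounded and $\|A_n(\pi) - A_n^{(N)}(\pi)\| = O(N/n) \to 0$. Hence $A_n^{(N)}(\pi)$ converges strongly to the same limit $A^*(\pi)$; but each $A_n^{(N)}(\pi)$ lies in the strongly closed algebra $\mathrm{W}^*\bigl(\bigcup_{j > N} \rho_j(C)\bigr)$, so $A^*(\pi)$ does as well.

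The main technical point I expect to be the obstacle is verifying that the strong limit $A^*(\pi)$ exists. This should be handled by induction on $|\pi|$: the single-block case is exactly the Ces\`{a}ro-type limit $\lim_n \tfrac{1}{n}\sum_{i=1}^n \widetilde \rho_i(\beta) = E_{\eu{QE}}[\widetilde \rho_j(\beta)]$ already derived in the proof of Theorem \ref{definetti}, and the inductive step peels off an innermost interval $V$ of $\pi$ and applies the single-block formula to that inner block, producing a $\eu{QE}$-valued insertion which is absorbed via the amalgamated free product $C * \eu{QE}$ into a shorter product; the inductive hypothesis for $\pi \setminus V$ then completes the step.
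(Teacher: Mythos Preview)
Your proposal is correct and follows essentially the same route as the paper: prove $\eu T \subset \eu{QE}$ by the obvious containment $\mathrm{W}^*\bigl(\bigcup_{j>n}\rho_j(C)\bigr) \subset \eu{QE}_n$, and for the reverse show $E_{\eu{QE}}[\rho_{j_1}(c_1)\dotsb\rho_{j_k}(c_k)] \in \eu T$ by writing it as the strong limit of the block-averages and observing that shifting the index range by a fixed $N$ perturbs only $O(Nn^{|\pi|-1})$ uniformly bounded summands. The only difference is that your ``main technical point'' --- existence of the strong limit $A^*(\pi)$ --- is not an obstacle: it is exactly $E_{\eu{QE}}^{(\pi)}[\widetilde\rho_{j_1}(c_1)\otimes\dotsb\otimes\widetilde\rho_{j_k}(c_k)]$ by Lemma~\ref{expform} together with Proposition~\ref{explim}(ii), so no separate induction is needed.
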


\begin{proof}
It is clear that $\eu{T} \subset \eu{QE}$, since
\begin{equation*}
\mathrm{W}^*\biggl(\bigcup_{j > n} \rho_j(C)\biggr) \subset \eu{QE}_n.
\end{equation*}
For the other inclusion, it suffices to show that if $j_1,\dotsc,j_k \in \N$ and $c_1,\dotsc,c_k \in C$, then
\begin{equation*}
 E_{\eu{QE}}[\rho_{j_1}(c_1)\dotsb \rho_{j_k}(c_k)] \in \eu{T}.
\end{equation*}
From the proof of Theorem \ref{definetti}, for any $m \in \N$ we have
\begin{equation*}
 E_{\eu{QE}}[\rho_{j_1}(c_1)\dotsb \rho_{j_k}(c_k)] = \lim_{n \to \infty} \sum_{\substack{\sigma \in NC(k)\\ \sigma \leq \ker \mathbf j}} \sum_{\substack{\pi \in NC(k)\\ \pi \leq \sigma}} \frac{1}{n^{|\pi|}}\sum_{\substack{m \leq i_1,\dotsc,i_k \leq n\\ \pi \leq \ker \mathbf i}} \rho_{i_1}(c_1)\dotsb \rho_{i_k}(c_k),
\end{equation*}
with convergence in the strong topology.  In particular,
\begin{equation*}
 E_{\eu{QE}}[\rho_{j_1}(c_1)\dotsb \rho_{j_k}(c_k)] \in \mathrm W^*\biggl(\bigcup_{j \geq m} \rho_j(C)\biggr).
\end{equation*}
Since $m$ was arbitrary,
\begin{equation*}
 E_{\eu{QE}}[\rho_{j_1}(c_1)\dotsb \rho_{j_k}(c_k)] \in \eu{T}
\end{equation*}
which completes the proof.
\end{proof}

\begin{rmk}
The Hewitt-Savage zero-one law states that for an i.i.d. sequence of random variables, the exchangeable $\sigma$-algebra is trivial.  It is then natural to ask if the quantum exchangeable algebra is trivial for a sequence which is identically distributed and freely independent.  Indeed this is the case, and follows from the more general \textit{braided  Hewitt-Savage zero-one law} \cite{braid}*{Theorem 2.5}.  Since this also follows easily from Theorem \ref{definetti}, we will include a direct proof.
\end{rmk}

\begin{thm}
Let $(\rho_i)_{i \in \N}$ be a sequence of unital $*$-homomorphisms from a unital $*$-algebra $C$ into a W$^*$-probability space $(M,\varphi)$.  If the sequence is freely independent and identically distributed with respect to $\varphi$, then
\begin{equation*}
 \eu{QE} = \C.
\end{equation*}
\end{thm}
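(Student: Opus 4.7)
The plan is to reduce to showing that the tail algebra is trivial and then to exploit associativity of freeness. Since $(\rho_i)_{i \in \N}$ is free and identically distributed with respect to $\varphi$ (viewed as a conditional expectation onto $\C \cdot 1$), Proposition \ref{freeexch} shows the sequence is quantum exchangeable, so the preceding proposition applies to give $\eu{QE} = \eu{T}$. It therefore suffices to prove $\eu{T} = \C$.

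My next step would be to invoke associativity of free independence with respect to $\varphi$: since $(\rho_j(C))_{j \in \N}$ is a $\varphi$-free family and $\varphi$ is a faithful normal state, for every $n \in \N$ the two von Neumann subalgebras
\[ M_{\leq n} = \mathrm{W}^*\bigl(\rho_j(C) : 1 \leq j \leq n\bigr) \quad \text{and} \quad M_{>n} = \mathrm{W}^*\bigl(\rho_j(C) : j > n\bigr) \]
are free with respect to $\varphi$. Any $x \in \eu{T}$ lies in $M_{>n}$ for every $n$, and is therefore $\varphi$-free from $M_{\leq n}$ for every $n$. In particular, for any $y \in \bigcup_n M_{\leq n}$, freeness of $\{x\}$ and $\{y\}$ yields the first-moment factorization $\varphi(y^* x) = \varphi(y^*) \varphi(x)$.

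To conclude, I would use that $\bigcup_n M_{\leq n}$ is $|\;|_2$-dense in $M_\infty$ (it is an ultraweakly dense increasing union of von Neumann subalgebras, hence dense in the GNS space by normality of $\varphi$). Choosing $y_N \in \bigcup_n M_{\leq n}$ with $y_N \to x$ in $|\;|_2$ and passing to the limit gives
\[ |x|_2^2 = \varphi(x^* x) = \lim_{N \to \infty} \varphi(y_N^* x) = \varphi(x^*) \varphi(x) = |\varphi(x)|^2. \]
The equality case of the Cauchy--Schwarz inequality for the faithful state $\varphi$ then forces $x = \varphi(x) \cdot 1 \in \C$, completing the argument.

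The main thing to verify carefully is the associativity of freeness and its passage to the weak closure, but this is a well-known fact in free probability; everything else amounts to straightforward manipulation with the GNS inner product and the definition of the tail algebra.
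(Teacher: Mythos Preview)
Your proof is correct but takes a different route from the paper. The paper works directly with the conditional expectation $E_{\eu{QE}}$: it shows $\Ker \varphi_\infty \subset \Ker E_{\eu{QE}}$ by invoking Theorem~\ref{definetti} to reduce to the single-letter case $E_{\eu{QE}}[\rho_j(c)]$ with $\varphi(\rho_j(c))=0$, and then computes this via the averaging formula $E_{\eu{QE}}[\rho_j(c)] = \lim_n n^{-1}\sum_{i=1}^n \rho_i(c)$, whose $|\;|_2$-norm squared is $n^{-1}\varphi(\rho_1(c^*c)) \to 0$ by freeness. You instead pass through the identification $\eu{QE} = \eu{T}$ (the preceding proposition) and then prove tail triviality by the classical argument: freeness of $M_{>n}$ from $M_{\leq n}$, the first-moment factorization, and a density/Cauchy--Schwarz step. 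Both routes ultimately lean on Theorem~\ref{definetti}---yours through the $\eu{QE} = \eu{T}$ proposition, whose proof in the paper already uses the explicit form of $E_{\eu{QE}}$ established there. Your tail-triviality step has the merit of being a general fact about $\varphi$-free families, independent of the paper's integration formula; the paper's approach stays entirely within its own computational toolkit and avoids invoking the (standard but not proved here) passage of freeness to weak closures.
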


\begin{proof}
It suffices to show that $\Ker \varphi_\infty \subset \Ker E_{\eu{QE}}$.  Since $(\rho_i)_{i \in \N}$ are freely independent, elements of the form
\begin{equation*}
 \rho_{j_1}(c_1)\dotsb \rho_{j_k}(c_k)
\end{equation*}
where $j_1 \neq \dotsb \neq j_k$, $c_1,\dotsc,c_k \in C$ and $\varphi(\rho_{j_l}(c_l)) = 0$ for $1 \leq l \leq k$, span a $|\;|_2$ dense subspace of $\Ker \varphi_\infty$.  So it suffices to show that
\begin{equation*}
 E_{\eu{QE}}[\rho_{j_1}(c_1)\dotsb \rho_{j_k}(c_k)] = 0
\end{equation*}
whenever $j_1,\dotsc,j_k$ and $c_1,\dotsc,c_k$ are as above.  By Theorem \ref{definetti}, it suffices to show that
\begin{equation*}
 E_{\eu{QE}}[\rho_j(c)] = 0
\end{equation*}
for any $c \in C$ and $j \in \N$ such that $\varphi(\rho_j(c)) = 0$.  As observed in the proof of Theorem \ref{definetti}, we have
\begin{equation*}
 E_{\eu{QE}}[\rho_j(c)] = \lim_{n \to \infty} \frac{1}{n}\sum_{i=1}^n \rho_i(c),
\end{equation*}
with convergence in $|\;|_2$.  But
\begin{align*}
 \biggl|\frac{1}{n}\sum_{i=1}^n \rho_i(c)\biggr|_2^2 &= \frac{1}{n^2}\sum_{1 \leq i_1,i_2 \leq n} \varphi(\rho_{i_1}(c^*)\rho_{i_2}(c))\\
&= \frac{1}{n^2}\sum_{1 \leq i \leq n} \varphi(\rho_i(c^*c))\\
&= \frac{1}{n} \varphi(\rho_1(c^*c)),
\end{align*}
where we have used the fact that $(\rho_i)_{i \in \N}$ is freely independent and identically distributed.  Since this goes to zero as $n$ goes to infinity, the result follows.
\end{proof}

\section*{Acknowledgements}

I thank Dan-Virgil Voiculescu for his many helpful suggestions and continued support while working on this project.  I would also like to thank Michael Anshelevich, Ken Dykema and Alexandru Nica for useful discussions, and Claus K\"{o}stler for many helpful comments on an earlier version of this paper.

\bibliography{ref.bib}
\end{document}